\documentclass[11pt,a4paper]{amsart}
\usepackage[margin=1.02in]{geometry}
\usepackage{amsmath,amssymb,amsthm,amscd,mathtools,microtype,booktabs,array,enumitem}
\usepackage[svgnames]{xcolor}
\usepackage[colorlinks,linkcolor=FireBrick,citecolor=DarkGreen,urlcolor=MidnightBlue]{hyperref}
\hypersetup{
  pdftitle={The hyperbolic class and vanishing laws in a TMF-valued four-manifold invariant},
  pdfauthor={Yuqi Li and Hao-Yu Sun},
  pdfsubject={A theorem under explicit Looijenga hypotheses on topological modular forms and four-manifold invariants},
  pdfkeywords={TMF, Looijenga line bundles, relative theorem, intersection forms, K3, four-manifold invariants}
}

\newcommand{\Z}{\mathbb Z}
\newcommand{\TMF}{\mathrm{TMF}}
\newcommand{\KO}{\mathrm{KO}}
\newcommand{\KU}{\mathrm{KU}}
\newcommand{\cM}{\mathcal M}
\newcommand{\cE}{\mathcal E}
\newcommand{\cL}{\mathcal L}
\newcommand{\cO}{\mathcal O}

\newcommand{\CP}{\mathbb{CP}}

\newcommand{\Hom}{\operatorname{Hom}}
\newcommand{\res}{\operatorname{res}}
\newcommand{\dclass}{\mathfrak d}

\theoremstyle{plain}
\newtheorem{theorem}{Theorem}[section]
\newtheorem{proposition}[theorem]{Proposition}
\newtheorem{lemma}[theorem]{Lemma}
\newtheorem{corollary}[theorem]{Corollary}
\theoremstyle{definition}

\newtheorem{assumption}[theorem]{Hypotheses}
\theoremstyle{remark}
\newtheorem{remark}[theorem]{Remark}

\title[The hyperbolic class and vanishing laws]{The hyperbolic class and vanishing laws\\in a TMF-valued four-manifold invariant}
\author{Yuqi Li}
\address{C. N. Yang Institute for Theoretical Physics, Stony Brook University, Stony Brook, New York 11794, USA}
\email{yuqi.li@stonybrook.edu}

\author{Hao-Yu Sun}
\address{Weinberg Institute, The University of Texas at Austin, 2515 Sppedway, C1600, Austin, Texas 78712, USA}
\email{hkdavidsun@utexas.edu}
\date{6 September 2026}
\subjclass[2020]{55N34, 57R56, 11E12, 57N13}
\keywords{topological modular forms, Looijenga line bundles, relative theorem, intersection forms, K3 surface, four-manifold invariants}

\begin{document}

\begin{abstract}
Under explicit hypotheses on the spectral Looijenga construction,
we compute the hyperbolic-plane value of the zero-section invariant
of Gukov, Krushkal, Meier, and Pei in periodic topological modular
forms. The value is the Hopf element eta, and adjoining a hyperbolic
plane acts by multiplication by this element. Earlier work obtains
the hyperbolic value under an additional cobordism-duality assumption;
here we derive it directly from the Looijenga restriction maps without
that assumption. Together with annihilation and vanishing results for
definite lattices, this gives a complete value formula on smooth closed
simply connected spin four-manifolds. In particular, nonzero signature
forces the invariant to vanish, so both orientations of a K3 surface
have value zero.
\end{abstract}

\maketitle
\enlargethispage{2pt}% Accommodate the first-page title/footnote block.

\section{Introduction}

Gukov, Krushkal, Meier, and Pei (GKMP) propose a
$\TMF$-valued construction for bilinear forms and for simply
connected $3$- and $4$-manifolds \cite{GKMP}. Their construction
passes from the module associated with a lattice to its closed
class by zero-section restriction. For a unimodular integral
symmetric form $b$, whose positive and negative indices are
$b_+$ and $b_-$, this restriction determines, up to sign,
\begin{equation}\label{eq:intro-degree}
 \dclass_b\in\pi_{3b_- -2b_+}\TMF.
\end{equation}
The invariant associated with a closed simply connected
oriented four-manifold $X$ is defined by
\begin{equation}\label{eq:intro-Z}
 Z(X)=\dclass_{-Q_X},
\end{equation}
where $Q_X$ is the intersection form.

The first nontrivial even unimodular example is the hyperbolic plane
\[
 H=\begin{pmatrix}0&1\\1&0\end{pmatrix},
\]
which is the intersection form of $S^2\times S^2$.
GKMP and Li obtain the expected value $\eta$ under an
additional cobordism-duality assumption
\cite[Question~7.14 and Example~8.1]{GKMP},
\cite[Theorem~7.5]{Li}. We derive the same closed class directly
from the Looijenga restriction maps. For the hyperbolic value,
the contribution is therefore a direct proof under different
hypotheses, rather than a new value.

Identifying the state module as a suspension of $\TMF$ is not
enough for this calculation: the closed value also depends on
the zero-section map from that module. Once this map is computed,
direct-sum compatibility determines the effect of stabilization
by $S^2\times S^2$. In particular, the connected sums
$\#^q(S^2\times S^2)$ have the successive values
$\eta,\eta^2,\eta^3$, followed by zero for $q\ge4$.
Two further lattice arguments give annihilation by $\nu$ for
positive-definite classes and vanishing of the closed class itself
for negative even unimodular lattices. Together with the smooth
spin restrictions on intersection forms, these statements give
the four-manifold value formula below.

Our results are relative to the spectral Looijenga hypotheses in
Hypotheses~\ref{contract:foundational}. These specify the Looijenga
lines, their normalization and rank-one formulas, and the finite
compatibilities of pullback, tensor product, zero-section restriction,
and structural pushforward used in the proofs. We use
\emph{Looijenga-relative} to indicate this dependence.
The argument does not assume the cobordism-duality statement in
GKMP Question~7.14 or the existence of a full $(3+1)$-dimensional TQFT.

\begin{theorem}[Main theorem under the Looijenga hypotheses]\label{thm:intro-main}
Under Hypotheses~\ref{contract:foundational}, the following
statements hold.
\begin{enumerate}[label=\textup{(\roman*)},leftmargin=2.8em]
\item The hyperbolic class is
\[
 \dclass_H=\eta\in\pi_1\TMF.
\]
\item For every unimodular form $b$ and every $r\ge1$,
\[
 \dclass_{b\oplus H^{\oplus r}}=\eta^r\dclass_b.
\]
In particular, four hyperbolic summands annihilate every class.
\item If $b$ is positive definite, unimodular, and of positive rank, then
\[
 \nu\dclass_b=0.
\]
\item If $b$ is positive definite, even, unimodular, and of positive rank, then
\[
 \dclass_{-b}=0.
\]
\item For every smooth closed simply connected spin four-manifold $X$,
\[
 Z(X)=
 \begin{cases}
  1,&Q_X=0,\\
  \eta^q,&Q_X\cong qH,\ q\ge1,\\
  0,&\sigma(X)\ne0.
 \end{cases}
\]
Thus the intersection forms with nonzero invariant and their
values are
\[
 0,H,2H,3H\longmapsto1,\eta,\eta^2,\eta^3.
\]
\end{enumerate}
\end{theorem}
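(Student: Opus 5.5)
The plan is to prove the five parts in order, with (i) carrying the substantive computation and (ii)--(v) following formally from it, from direct-sum compatibility, and from known facts about $\pi_*\TMF$ and smooth spin four-manifolds.

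\textbf{Part (i).} Under Hypotheses~\ref{contract:foundational}, the hyperbolic lattice $H$ has $b_+=b_-=1$, so $\dclass_H$ lies in $\pi_{3-2}\TMF=\pi_1\TMF\cong\Z/2\{\eta\}$. It therefore suffices to show that $\dclass_H\neq0$.

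I would write $H$ as the span of two isotropic vectors $e,f$ with $e\cdot f=1$. The rank-one formulas in the hypotheses identify the Looijenga line of a rank-one lattice with a shifted $\TMF$-module. The pullback and tensor compatibilities then express the module of $H$ as a Thom-type object over the two-dimensional torus bundle. The zero-section restriction of this object composed with the structural pushforward should be the transfer associated with the Lie-group framing of a circle factor. That transfer is detected by $\eta$, since the framed circle represents $\eta$ in $\pi_1^s$ and the unit $\pi_1^s\to\pi_1\TMF$ is an isomorphism on this class.

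This is the main obstacle: one must check that the isotropic direction contributes exactly the nontrivially framed circle, rather than the bounding framing. I would first try to compute the restriction after base change to $\KO$ via the Tate or $q$-expansion map, where $\eta$ is detected by the Atiyah--Bott--Shapiro orientation of $S^1$ with the Lie framing. Since $\pi_1\TMF\to\pi_1\KO((q))$ is injective, nonvanishing there suffices.

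\textbf{Parts (ii)--(iv).} Part (ii) follows from direct-sum compatibility $\dclass_{b\oplus b'}=\dclass_b\dclass_{b'}$, which comes from the tensor-product and zero-section compatibilities, applied $r$ times with (i). Since $\eta^4=0$ in $\pi_*\TMF$, four hyperbolic summands annihilate every class.

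For (iii), a positive-definite class of rank $n>0$ lies in $\pi_{-2n}\TMF$, so $\nu\dclass_b$ lies in $\pi_{3-2n}\TMF$. I would run through the Looijenga pushforward, using that $\dclass_b$ factors through the restriction of a rank-one summand, or of $\langle1\rangle$ after stabilization, where multiplication by $\nu$ is killed by the relevant attaching map. Failing that, I would check degree by degree using the known structure of $\pi_*\TMF$ in negative degrees, which is Anderson dual to the positive degrees.

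For (iv), $-b$ lands in $\pi_{3n}\TMF$ with $8\mid n$. I would use $b\oplus(-b)\cong nH$, which holds because indefinite even unimodular lattices are classified by rank and signature. This gives $\dclass_b\dclass_{-b}=\eta^n=0$, but that alone is insufficient. Instead I would show that $\dclass_{-b}$ is a multiple of $\nu$-torsion data coming from (iii), combined with the fact that the relevant group is generated by elements on which $\dclass_b$ acts injectively, and so conclude that $\dclass_{-b}=0$.

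\textbf{Part (v).} By Donaldson's theorem and Freedman's classification, $Q_X$ is either $0$, $qH$, or, when $\sigma\neq0$, contains $\pm E_8$ summands. By Furuta's $10/8$ inequality, any form with nonzero signature is $\pm 2kE_8\oplus qH$ with $k\ge1$, and Rokhlin's theorem requires the $E_8$ count to be even.

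Since $Z(X)=\dclass_{-Q_X}$, a positive $E_8$-summand in $-Q_X$ gives a factor killed by (iv) when it is negative. The remaining case is handled by Furuta's bound $q\ge 2k+1\ge3$. Then $\eta^q\dclass=\eta^3\dclass$, and $\eta^3=4\nu$, so (iii) gives zero. The lattice $0$ gives $1$, and $qH$ gives $\eta^q$ by (ii), with the sign convention irrelevant because $-H\cong H$.
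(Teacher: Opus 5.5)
You handle (ii) and (v) essentially as the paper does; the sign in $\dclass_{b\oplus c}=\pm\dclass_b\dclass_c$ is absorbed by $2\eta=0$. However, (i), (iii) and (iv) each have a genuine gap, and (v) inherits the last two.

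In (i), reducing to $\dclass_H\neq0$ in $\pi_1\TMF\cong\Z/2$ is fine, but the nonvanishing is never proved. The identification with the Lie-framed circle transfer is only what ``should'' happen, and you yourself flag the framing as the main obstacle. The check over $\KO((q))$ is not carried out, and it would need the same explicit control of the restriction map that is missing. The paper obtains that control by restricting along the isotropic line: $e^*$ factors as $L^H\to L^{(0)}\simeq\TMF\oplus\TMF[1]\xrightarrow{(1\ \eta)}\TMF$. Hence $\dclass_H=\alpha+\eta\beta$ with $\alpha\in\pi_1\TMF$ and $\beta\in\pi_0\TMF$. The framed circle only accounts for the $\eta$ in the row $(1\ \eta)$. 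The content specific to $H$ is that $\beta$ is a unit and $\alpha=0$; your ``bounding framing'' worry corresponds, for instance, to $\alpha=\eta$, which would cancel and give $\dclass_H=0$. Two divisor arguments settle this. First, the Cartier-divisor triangle with fiber $L^K[-2]\simeq\TMF[-1]$ splits because $\pi_{-2}\TMF=0$, and $\pi_{-1}\TMF=0$ then forces $\beta=\pm1$. Second, the canonical section $s_1$ factors $\pm\alpha$ through $\Hom_{\TMF}(\TMF[1],\TMF[3])\cong\pi_{-2}\TMF=0$.

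For (iii), neither suggested route works. ``The attaching map kills $\nu$'' is not an argument. A degree-by-degree check cannot succeed either, since $\dclass_b$ is unknown and $\nu$ does not annihilate $\pi_{-2n}\TMF$ in general. For example, for $b=36E_8$ one has $n=288$; periodicity gives $\pi_{-576}\TMF\cong\pi_0\TMF$, and $\nu\Delta^{-24}\neq0$. The missing idea is to evaluate $b\oplus\langle-1\rangle$ in two ways. It is odd, indefinite and unimodular, hence isometric to $\langle1\rangle^{\oplus d}\oplus\langle-1\rangle$, whose class vanishes because $\dclass_{\langle1\rangle}=0$. The original splitting gives $\pm\nu\dclass_b$ for the same class.

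For (iv), you correctly see that $\dclass_b\dclass_{-b}=\pm\eta^n=0$ carries no information. The proposed fix cannot work, because $\dclass_b$ is never computed (not even $\dclass_{E_8}$), so there is no injectivity to use. The paper's mechanism is different in kind. Cancelling the invertible factor $(p_1)_*\cL_{\langle1\rangle}$ on $\cM$ gives $(p_d)_*\cL_{-b}\simeq\cO_{\cM}^{\mathrm{top}}[3d]$, with relative descent concentrated in filtration $d$, while the target of restriction lies in filtration $0$. Hence the modular-form edge of $\dclass_{-b}$ vanishes. Since $8\mid d$, this places $\dclass_{-b}$ in the Tate kernel $A_{24k}$, which is zero by the Tachikawa--Yamashita tables. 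The edge vanishing alone is not enough: it also holds for $\langle-1\rangle$, where the class is $\pm\nu$.

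Finally, in (v) the correct relation is $\eta^3=12\nu$, not $4\nu$. This slip is harmless once $\nu\dclass_{pE_8}=0$ is known, but that fact, and the case $\sigma(X)>0$, rest on the unproved (iii) and (iv).
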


The main step in part~\textup{(i)} is the hyperbolic restriction
calculation. Write $L^b$ for the global-section module of the
Looijenga line associated with $b$. An integral isometry after
adjoining one rank-one form identifies $L^H\simeq\TMF[1]$.
Restriction along a primitive isotropic vector then factors the
zero-section map through the module of the rank-one zero form:
\[
 L^H\longrightarrow L^{(0)}\simeq\TMF\oplus\TMF[1]
       \xrightarrow{\ (1\ \eta)\ }\TMF.
\]
If the first map has components $\alpha$ and $\beta$, the closed
class is
\[
 \dclass_H=\alpha+\eta\beta.
\]
This reduces the computation to two component maps, which are
controlled by different divisor constructions. A Cartier-divisor
triangle, together with $\pi_{-2}\TMF=\pi_{-1}\TMF=0$,
shows that $\beta$ is a unit. A second divisor section factors
$\alpha$ through a map in $\pi_{-2}\TMF$, so $\alpha=0$.
The units of $\pi_0\TMF$ are $\pm1$, and $2\eta=0$;
hence $\dclass_H=\eta$. Direct-sum compatibility gives the
stabilization law in part~\textup{(ii)}.

For part~\textup{(iii)}, adjoining $\langle-1\rangle$ to the
positive-definite form produces an odd indefinite unimodular
lattice. Its integral classification gives a diagonal presentation
containing a positive line, whose closed class is zero. In the
original presentation, direct-sum compatibility gives
$\pm\nu\dclass_b$. Comparing the two evaluations of the same
enlarged lattice proves $\nu\dclass_b=0$.

Part~\textup{(iv)} instead uses total pushforward to the moduli
stack. The stabilizing factor becomes invertible on that base and
can be cancelled there. The relative descent filtration then
forces the modular-form edge of the zero-section class to vanish.
This does not by itself make the spectral restriction map
nullhomotopic. For an even unimodular lattice, however, it places
the class in a Tate-kernel group that vanishes by the calculation
of Tachikawa--Yamashita \cite{TY}. Section~\ref{sec:negative}
gives these two steps separately.

To obtain part~\textup{(v)}, we combine the lattice formulas
with the restrictions on smooth simply connected spin
four-manifolds. Donaldson's theorem excludes nonzero definite
even intersection forms \cite{Donaldson}; at zero signature,
the even indefinite classification leaves a sum of hyperbolic
planes. At positive signature, negative-even vanishing applies
to the definite summands of $-Q_X$. At negative signature,
Rokhlin's theorem and Furuta's $10/8$ inequality supply enough
hyperbolic summands for positive-definite annihilation to force
vanishing \cite{Rokhlin,Furuta}. This is where the smooth spin
restrictions enter the value formula; the details are in
Section~\ref{sec:four-manifolds}.

Several of these features also occur in the earlier lattice model
for superconformal field theories, which predicts the hyperbolic
value, connected-sum multiplication, fourfold hyperbolic nilpotence,
and vanishing for K3 with its usual orientation
\cite[Section~2.7]{GPPV}. That model concerns a physical assignment,
whereas our statements concern the zero-section class $Z$ of
\eqref{eq:intro-Z}, under Hypotheses~\ref{contract:foundational}.
Chae's extension of the lattice-model table assigns the nonzero
class $\nu\Delta^2$ to reverse-oriented K3
\cite[Section~2, Table~1]{Chae}. The model's stated topological
domain and connected-sum rule force this value to vanish, as
Remark~\ref{rem:chae-reversed-K3} explains. This correction concerns
the table entry within its own framework and does not identify
the physical assignment with $Z$.

Bundle comparisons throughout the paper retain the base maps induced
by the lattice isometries. Pullback by such an automorphism leaves
derived global sections unchanged, so an invertible stabilizing
module can be cancelled (Proposition~\ref{prop:module-cancel}).
The hyperbolic and positive-definite calculations retain the added
coordinates. In the negative-definite calculation, cancellation
takes place after total pushforward to the moduli stack, where the
stabilizing factor is invertible (Theorem~\ref{thm:negative-push}).

\section{The construction and its Looijenga hypotheses}\label{sec:contract}

Let $\cM=\mathcal M_{\mathrm{ell}}$ and let
$p_d:\cE^d\to\cM$ be the $d$-fold universal elliptic curve
with zero section $e:\cM\to\cE^d$. For an integral symmetric
form $b$ on $\Z^d$, write $\cL_b$ for the derived Looijenga
line and
\[
 L^b:=\Gamma(\cE^d,\cL_b).
\]
We use the suspension convention $M[k]=\Sigma^kM$, so
\begin{equation}\label{eq:Hom-convention}
 \Hom_{\TMF}(\TMF[a],\TMF[b])\cong\pi_{a-b}\TMF.
\end{equation}
GKMP also use a subscripted dual family $L_b$, whose rank-one
shifts differ from those of the superscripted family. Except where
total duality is discussed, all state objects in this paper belong
to the superscripted family.

At the level of global-section modules, an invertible stabilizing
factor can be cancelled after applying a stable isometry. This
module-level cancellation identifies the source module of a
unimodular lattice.

\begin{proposition}[Cancellation of an invertible stabilizing module]\label{prop:module-cancel}
Suppose $b\oplus c\cong b'\oplus c$. If $L^c$ is invertible
as a $\TMF$-module, then
\[
 L^b\simeq L^{b'}.
\]
\end{proposition}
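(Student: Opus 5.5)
The plan is to combine three pieces of structure from Hypotheses~\ref{contract:foundational}: direct-sum (external tensor) compatibility of the Looijenga lines, invariance of derived global sections under pullback along isomorphisms of the base, and the fact that $\TMF$-module equivalence is preserved by tensoring with an invertible module. Concretely, write $d,d',e$ for the ranks of $b,b',c$. Since $b\oplus c\cong b'\oplus c$ the ranks satisfy $d=d'$. Fix an integral isometry $g\in GL_{d+e}(\Z)$ carrying $b'\oplus c$ to $b\oplus c$. The matrix $g$ induces an automorphism $\phi_g$ of $\cE^{d+e}$ over $\cM$, commuting with $p_{d+e}$ and with the zero section. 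The pullback compatibility in the hypotheses will give
\[
\phi_g^*\cL_{b\oplus c}\simeq\cL_{b'\oplus c},
\]
and because $\phi_g$ is an isomorphism of the base, $\Gamma(\cE^{d+e},\phi_g^*\cL)\simeq\Gamma(\cE^{d+e},\cL)$. Together these yield $L^{b\oplus c}\simeq L^{b'\oplus c}$ as $\TMF$-modules. This is exactly the point the introduction stresses: the base map is retained and only absorbed at the level of global sections.

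Next I will identify each side as a tensor product. Under the Hypotheses, $\cL_{b\oplus c}\simeq\mathrm{pr}_1^*\cL_b\otimes\mathrm{pr}_2^*\cL_c$ on $\cE^{d}\times_\cM\cE^{e}=\cE^{d+e}$. Taking global sections then gives a Künneth-type equivalence
\[
L^{b\oplus c}\simeq L^b\otimes_{\TMF}L^c .
\]
I expect this Künneth step to be the main obstacle. In general, global sections over a fibre product over $\cM$ are only a relative tensor product over $\cO_\cM$ after pushforward, and they need not be a tensor product over $\TMF=\Gamma(\cM,\cO)$. The first thing I would try is to find the needed compatibility stated among the finite compatibilities of pushforward and tensor product listed in Hypotheses~\ref{contract:foundational}.

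If that compatibility is not available there, the fallback is to use the invertibility of $L^c$ directly. Let $q:\cE^{d+e}\to\cE^d$ be the projection. I would compute $q_*\bigl(\mathrm{pr}_1^*\cL_b\otimes\mathrm{pr}_2^*\cL_c\bigr)\simeq\cL_b\otimes p_d^*(p_{e*}\cL_c)$ by the projection formula and base change. Invertibility of $L^c$ should force $p_{e*}\cL_c$ to be an invertible $\cO_\cM$-module pulled back from $\TMF$, namely $\cO_\cM\otimes_\TMF L^c$, and the tensor factor then comes out of $\Gamma$. Justifying that last identification, that an invertible global module arises from an invertible sheaf, is the delicate point. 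I would handle it by working with the hypothesis's rank-one formulas, or by restricting the claim to the $c$ actually used later, which are sums of rank-one forms with explicitly suspended $L^c$.

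Granting these equivalences, we have
\[
L^b\otimes_\TMF L^c\simeq L^{b'}\otimes_\TMF L^c .
\]
Let $(L^c)^{-1}$ be an inverse of $L^c$ in the Picard group of $\TMF$-modules. Tensoring both sides with $(L^c)^{-1}$ and using associativity together with $L^c\otimes(L^c)^{-1}\simeq\TMF$ gives $L^b\simeq L^{b'}$. This equivalence is not canonical, since it depends on the choice of $g$, but the proposition only asserts existence.
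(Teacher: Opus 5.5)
Your proposal is correct and follows the paper's proof step for step. An actual isometry of the enlarged lattices induces an automorphism of $\cE^{d+e}$ over $\cM$, giving $L^{b\oplus c}\simeq L^{b'\oplus c}$; the direct-sum K\"unneth equivalence rewrites the two sides as $L^b\otimes_{\TMF}L^c$ and $L^{b'}\otimes_{\TMF}L^c$; and tensoring with the inverse of $L^c$ finishes. The K\"unneth step you flag is simply taken by the paper as the external-product equivalence granted in (L1) together with its stated standard formal properties, so your fallback argument is not needed. The step also follows from relative K\"unneth once global sections on $\cM$ is symmetric monoidal, which is the $0$-affineness of $\cM$.
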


\begin{proof}
An actual isometry of the enlarged lattices induces an automorphism
of the corresponding power of the universal elliptic curve over
$\cM$. Pullback by this automorphism preserves derived global
sections, yielding an equivalence
\[
 L^{b\oplus c}\simeq L^{b'\oplus c}.
\]
Under the direct-sum K\"unneth equivalence, this becomes
\[
 L^b\otimes_{\TMF}L^c\simeq L^{b'}\otimes_{\TMF}L^c.
\]
Tensoring with the inverse of $L^c$ proves the claim.
\end{proof}

\begin{corollary}[Unimodular source shifts]\label{cor:unimodular-shift}
If $b$ is unimodular, then
\begin{equation}\label{eq:unimodular-shift}
 L^b\simeq\TMF[3b_- -2b_+].
\end{equation}
The equivalence is noncanonical and is unique only up to
multiplication by a unit $\pm1$.
\end{corollary}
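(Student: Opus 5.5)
The plan is to reduce an arbitrary unimodular $b$ to a diagonal form by stabilizing, and then cancel the stabilizing factor with Proposition~\ref{prop:module-cancel}.

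First, fix the rank-one inputs from Hypotheses~\ref{contract:foundational}. The normalization and rank-one formulas give
\[
 L^{\langle1\rangle}\simeq\TMF[-2],\qquad L^{\langle-1\rangle}\simeq\TMF[3],
\]
which I take as the content of those formulas; they are the two rank-one unimodular forms. Both modules are invertible. The direct-sum K\"unneth equivalence then gives, for the diagonal form $m\langle1\rangle\oplus n\langle-1\rangle$,
\[
 L^{m\langle1\rangle\oplus n\langle-1\rangle}\simeq\TMF[3n-2m].
\]
This already matches \eqref{eq:unimodular-shift} for diagonal forms.

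Next, let $b$ be unimodular with indices $b_\pm$. Set $c=\langle1\rangle\oplus\langle-1\rangle$. The form $b\oplus c$ is odd and indefinite, because it contains the odd vector of $\langle1\rangle$ and has both signs. The classification of odd indefinite unimodular lattices (Hasse--Minkowski together with Serre's classification) then gives
\[
 b\oplus c\cong (b_++1)\langle1\rangle\oplus(b_-+1)\langle-1\rangle.
\]
The right side equals $b'\oplus c$ with $b'=b_+\langle1\rangle\oplus b_-\langle-1\rangle$. By the previous step $L^c\simeq\TMF[1]$ is invertible. Proposition~\ref{prop:module-cancel} therefore yields
\[
 L^b\simeq L^{b'}\simeq\TMF[3b_--2b_+].
\]
The rank-zero case is trivial and the rank-one cases are the normalization itself. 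Stabilizing uniformly by $c$ also handles definite and even $b$ without any case split.

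For the uniqueness statement, I compute
\[
 \operatorname{Aut}_{\TMF}(\TMF[k])\cong(\pi_0\TMF)^\times=\{\pm1\}
\]
via \eqref{eq:Hom-convention}. Any two equivalences therefore differ by $\pm1$. The equivalence is noncanonical because it depends on the chosen isometry and on the chosen inverse of $L^c$.

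The main point to check is that the rank-one formulas in the hypotheses really give the shifts $-2$ and $3$ in the superscripted family, rather than those of the dual family $L_b$. Everything else is a formal combination of K\"unneth, invertibility and the odd indefinite classification.
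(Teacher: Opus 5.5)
Your proof is correct and follows the paper's argument: you stabilize to an odd indefinite unimodular lattice, diagonalize it by the classification, cancel the invertible stabilizing module with Proposition~\ref{prop:module-cancel}, and read off the shift from the rank-one modules. The differences are cosmetic. You stabilize uniformly by $\langle1\rangle\oplus\langle-1\rangle$, where the paper uses a sign-dependent $\langle\varepsilon\rangle$, and you make explicit the $\pm1$ ambiguity through $\operatorname{Aut}_{\TMF}(\TMF[k])\cong(\pi_0\TMF)^\times=\{\pm1\}$, which the paper leaves implicit.
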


\begin{proof}
The rank-zero case is immediate. Otherwise choose
$\varepsilon\in\{1,-1\}$ so that
$b\oplus\langle\varepsilon\rangle$ is odd and indefinite.
The classification of odd indefinite unimodular forms
\cite[Chapter~II]{MH} gives an actual isometry
\[
 b\oplus\langle\varepsilon\rangle
 \cong
 I_{b_+,b_-}\oplus\langle\varepsilon\rangle.
\]
The rank-one modules, being suspensions of $\TMF$, are invertible.
Proposition~\ref{prop:module-cancel} therefore reduces the calculation
of $L^b$ to that of the diagonal form. The rank-one equivalences
\[
 L^{\langle1\rangle}\simeq\TMF[-2],
 \qquad
 L^{\langle-1\rangle}\simeq\TMF[3]
\]
give~\eqref{eq:unimodular-shift}.
\end{proof}

Zero-section restriction, followed by its canonical trivialization,
now gives a map
\[
 L^b\simeq\TMF[3b_- -2b_+]\longrightarrow\TMF,
\]
represented by the class $\dclass_b$ in~\eqref{eq:intro-degree},
well-defined up to sign. Restriction is compatible with direct
sums, so
\begin{equation}\label{eq:multiplicativity-sign}
 \dclass_{b\oplus c}=\pm\dclass_b\dclass_c.
\end{equation}
After multiplication by any positive power of $\eta$, this sign
ambiguity disappears because $2\eta=0$.

Here $\omega=e^*\Omega^1_{\cE/\cM}$ denotes the Hodge line on the
classical moduli stack, and $\cL_b^{\Z}$ denotes the underlying
integral algebraic Looijenga line. Its normalization is specified
in the following hypotheses.

\begin{assumption}[Looijenga lines and restriction maps]\label{contract:foundational}
For every free lattice $\Lambda$ and every integral symmetric
bilinear form $b$ on $\Lambda$, we assume the following finite
collection of structures and compatibilities.
\begin{enumerate}[label=\textup{(L\arabic*)},leftmargin=2.8em]
\item Writing $p_\Lambda:\cE\otimes\Lambda\to\cM$ for the
structural map, there is an invertible spectral line $\cL_b$
on $\cE\otimes\Lambda$. Isometries induce equivalences, a
lattice homomorphism $f:\Lambda'\to\Lambda$ induces an equivalence
$f^*\cL_b\simeq\cL_{f^*b}$, and addition of forms is represented
by tensor product. For an orthogonal direct sum this specializes
to the external-product equivalence used in
Proposition~\ref{prop:module-cancel} and
Lemma~\ref{lem:push-kunneth}.
\item The rank-one normalization and homotopy sheaves are
\[
 \cL_{\langle1\rangle}\simeq\cO_{\cE}^{\mathrm{top}}(e)[-2],
 \qquad
 \pi_{2t}\cL_b\cong\cL_b^{\Z}\otimes p_\Lambda^*\omega^t,
 \qquad
 \pi_{2t+1}\cL_b=0.
\]
\item Zero-section restriction is equipped with the trivializations
needed to define the closed classes $\dclass_b$, and these
restrictions are compatible with direct sums. Consequently
\[
 \dclass_{b\oplus c}=\pm\dclass_b\dclass_c.
\]
\item The finite naturality diagrams explicitly used below commute
in the homotopy category: pullback and tensoring of the
identity-divisor cofiber sequence, the canonical-section
factorization in~\eqref{eq:alpha-factor}, and compatibility of
restriction with the structural pushforwards in
Section~\ref{sec:negative}. No contractible space of all higher
coherent choices is assumed.
\item The rank-one identity-divisor, transfer, restriction-row,
and arbitrary-base pushforward statements listed in
Proposition~\ref{prop:public-rankone} hold.
\end{enumerate}
\end{assumption}

A theorem is called \emph{Looijenga-relative} when
Hypotheses~\ref{contract:foundational} constitute its only
foundational spectral-geometric input. Clauses~\textup{(L1)}--\textup{(L4)}
specify the arbitrary-lattice data and their finite compatibilities.
The main theorem also assumes the rank-one statements in
\textup{(L5)}. Proposition~\ref{prop:public-rankone} gives a
separate derivation of those statements from the cited results;
the theorem as stated does not depend on that derivation.

GKMP Theorem~4.5 states the object-level derived assignment but
defers a full proof to forthcoming work, whereas GKMP Conjecture~4.6
asks for a stronger Picard-groupoid-valued refinement. The
direct-sum and restriction structures used here are stated in GKMP
Lemmas~5.2, 5.4, and 6.2 and Construction~6.3 \cite{GKMP}.
The arguments below use the object-level data and finite
homotopy-category compatibilities specified by our hypotheses;
they neither establish nor strengthen those general construction
assertions.

The following proposition collects the rank-one divisor sequences,
pushforwards, and restriction maps needed in the calculations.

\begin{proposition}[Rank-one lines and the identity divisor]\label{prop:public-rankone}
Let $p:\cE\to\cM$ be the universal oriented spectral elliptic curve
and $e:\cM\to\cE$ its identity section. The following statements hold.
\begin{enumerate}[label=\textup{(\roman*)},leftmargin=2.6em]
\item There is a cofiber sequence of quasicoherent spectral sheaves
\begin{equation}\label{eq:public-divisor}
 \cO_{\cE}^{\mathrm{top}}(-e)\longrightarrow
 \cO_{\cE}^{\mathrm{top}}\longrightarrow
 e_*\cO_{\cM}^{\mathrm{top}}.
\end{equation}
It is stable under base change and under tensoring by an invertible
quasicoherent module.
\item The structural pushforwards are
\begin{equation}\label{eq:public-push}
 p_*\cO_{\cE}^{\mathrm{top}}\simeq
 \cO_{\cM}^{\mathrm{top}}\oplus\cO_{\cM}^{\mathrm{top}}[1],
 \qquad
 p_*\cO_{\cE}^{\mathrm{top}}(e)\simeq\cO_{\cM}^{\mathrm{top}},
 \qquad
 p_*\cO_{\cE}^{\mathrm{top}}(-e)\simeq\cO_{\cM}^{\mathrm{top}}[1].
\end{equation}
\item In the splitting in~\eqref{eq:public-push} defined by the unit
and the degree-shifting transfer, identity-section restriction is
$(1\ \eta)$, while the morphism induced by
$\cO_{\cE}^{\mathrm{top}}\to\cO_{\cE}^{\mathrm{top}}(e)$ is
$(\pm1\ 0)$.
\item With the GKMP normalization in
Hypotheses~\ref{contract:foundational},
\begin{equation}\label{eq:public-rankone-results}
 \begin{aligned}
 L^{\langle1\rangle}&\simeq\TMF[-2],
 &\qquad (p_1)_*\cL_{\langle1\rangle}&\simeq\cO_{\cM}^{\mathrm{top}}[-2],\\
 L^{(0)}&\simeq\TMF\oplus\TMF[1],
 & L^{\langle-1\rangle}&\simeq\TMF[3],\\
 && (p_1)_*\cL_{\langle-1\rangle}&\simeq\cO_{\cM}^{\mathrm{top}}[3].
 \end{aligned}
\end{equation}
The corresponding integral lines are
\[
 \cL_{\langle1\rangle}^{\Z}\cong\cO_{\cE}(e)\otimes p_1^*\omega,
 \qquad
 \cL_{\langle-1\rangle}^{\Z}\cong\cO_{\cE}(-e)\otimes p_1^*\omega^{-1}.
\]
\end{enumerate}
\end{proposition}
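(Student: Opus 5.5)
The statement collects several standard facts, and I would prove them in the order (i), (ii), (iv), (iii). Part~(iii) I expect to be the main obstacle.

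\emph{Part~(i).} I would cite Lurie's spectral construction of the universal oriented elliptic curve. In it, the identity section $e:\cM\to\cE$ is a closed immersion cut out by a relative effective Cartier divisor, so its ideal sheaf $\cO_{\cE}^{\mathrm{top}}(-e)$ is invertible and its cofiber is $e_*\cO_{\cM}^{\mathrm{top}}$. This gives \eqref{eq:public-divisor}. Both stability claims are then formal:
\begin{itemize}
\item $p$ is flat and $e$ is a section, so the sequence is compatible with base change along any $\cM'\to\cM$;
\item tensoring with an invertible module is exact, so it preserves cofiber sequences.
\end{itemize}

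\emph{Part~(ii).} The unit $\cO_{\cM}^{\mathrm{top}}\to p_*\cO_{\cE}^{\mathrm{top}}$ has a retraction $e^*$ because $pe=\mathrm{id}$. Hence $p_*\cO_{\cE}^{\mathrm{top}}$ splits as $\cO_{\cM}^{\mathrm{top}}$ plus the fiber $F$ of $e^*$. To identify $F$ I would use the relative descent spectral sequence together with the classical facts
\[
 R^0p_*p^*\omega^t=\omega^t,\qquad R^1p_*p^*\omega^t\cong\omega^{t-1}.
\]
The second is Serre duality with $\Omega^1_{\cE/\cM}\cong p^*\omega$. It follows that $F$ has homotopy sheaves $\omega^{t-1}$ in degree $2t-1$, which is exactly the pattern of $\cO_{\cM}^{\mathrm{top}}[1]$. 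A map realizing this identification is the degree-shifting transfer (relative Serre duality trivialization). That map induces an isomorphism on homotopy sheaves, hence is an equivalence.

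For the other two pushforwards I would apply $p_*$ to the sequence of (i) and to its twist by $\cO(e)$, and compare homotopy sheaves fiberwise:
\begin{itemize}
\item a degree-one line bundle has $H^0$ of rank one and $H^1=0$, so $p_*\cO(e)\simeq\cO_{\cM}^{\mathrm{top}}$;
\item $\cO(-e)$ has $H^0=0$ and $H^1\cong\omega^{-1}$, so $p_*\cO(-e)\simeq\cO_{\cM}^{\mathrm{top}}[1]$.
\end{itemize}

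\emph{Part~(iv).} Assuming (ii), the rank-one formulas follow from (L2).
\begin{itemize}
\item The normalization $\cL_{\langle1\rangle}\simeq\cO(e)[-2]$ gives $(p_1)_*\cL_{\langle1\rangle}\simeq\cO_{\cM}^{\mathrm{top}}[-2]$, hence $L^{\langle1\rangle}\simeq\TMF[-2]$ after taking global sections.
\item The zero form has trivial line, so $L^{(0)}\simeq\Gamma(p_*\cO)\simeq\TMF\oplus\TMF[1]$.
\item For $\langle-1\rangle$, I would identify the integral line by (L1): $\langle-1\rangle$ is $-\langle1\rangle$ and addition of forms is tensor product, so $\cL_{\langle-1\rangle}$ is the inverse of $\cL_{\langle1\rangle}$. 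This gives $\cL_{\langle-1\rangle}\simeq\cO(-e)[2]$, with integral line $\cO(-e)\otimes p_1^*\omega^{-1}$, because the shift $[2]$ contributes $\omega^{-1}$ on $\pi_{2t}$. Pushing forward gives $\cO_{\cM}^{\mathrm{top}}[1][2]=\cO_{\cM}^{\mathrm{top}}[3]$.
\item The integral line for $\langle1\rangle$ is read off from (L2) in the same way.
\end{itemize}

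\emph{Part~(iii), the main obstacle.} On the unit summand, $e^*$ restricts to $1$ by construction. On the transfer summand, $e^*$ gives a map $\cO_{\cM}^{\mathrm{top}}[1]\to\cO_{\cM}^{\mathrm{top}}$, that is, an element $x\in\pi_1\TMF=\Z/2\{\eta\}$. So I only need to show $x\ne0$. For this I would base-change to the Tate curve at the cusp, which (i) permits. There $\cE$ degenerates to $\mathbb G_m$, whose underlying topology is the circle. The transfer becomes the $S^1$-transfer, and composing with restriction to the basepoint gives the stable Hopf map $\eta$, which is detected in $\pi_1\KO$.

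For the morphism induced by $\cO\to\cO(e)$, I would use the identification $p_*\cO(e)\simeq\cO_{\cM}^{\mathrm{top}}$ and argue in two steps.
\begin{itemize}
\item On the unit summand the map is a unit on $\pi_0$, hence equal to $\pm1$, since the units of $\pi_0\TMF$ are $\pm1$.
\item On the $\cO[1]$ summand it is an element $y\in\pi_1\TMF$. This element induces zero on homotopy sheaves, because $R^1p_*\cO(e)=0$.
\end{itemize}
To show $y=0$ I would first try the same cusp comparison. On $\mathbb G_m$ the extension by the divisor kills the $S^1$-transfer class, which would force $y=0$ after detection in $\KO$. If that detection argument is too delicate, the fallback is to compute directly with the long exact sequence of $p_*$ applied to $\cO\to\cO(e)\to e_*e^*\cO(e)$.
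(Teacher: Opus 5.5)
Your treatment of (i), (ii), (iv) and of the row $(1\ \eta)$ agrees in substance with what the paper cites: the Gepner--Meier unit--transfer splitting, relative descent, and the framed-circle identity $\mathrm{res}\circ\mathrm{tr}=\eta$. Your detour through the Tate curve for $x$ is unnecessary. That identity already holds for the circle-equivariant theory, so it gives $x=\eta$ directly in $\pi_1\TMF$.

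The genuine gap is the second row of (iii). You never show that the transfer component $y\in\pi_1\TMF=\Z/2\{\eta\}$ of $i\colon p_*\cO_{\cE}^{\mathrm{top}}\to p_*\cO_{\cE}^{\mathrm{top}}(e)$ vanishes.
\begin{itemize}
\item Knowing that $y$ is zero on homotopy sheaves carries no information. By parity, \emph{every} map $\cO_{\cM}^{\mathrm{top}}[1]\to\cO_{\cM}^{\mathrm{top}}$ is zero on homotopy sheaves, including the map $\eta$ you found in the other row.
\item Your fallback is circular. Because $i$ is $\pm1$ on the unit summand, its fiber is a copy of $\cO_{\cM}^{\mathrm{top}}[1]$ included by $(\mp y,\,1)$. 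So $y=0$ is equivalent to the connecting map of the pushed-forward twisted sequence being a unit multiple of the transfer. The long exact sequence splits and cannot see that unit component.
\item The cusp remark that ``the divisor kills the transfer'' restates the claim at the Tate curve rather than proving it.
\end{itemize}

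The missing input is an identification of the transfer as the fiber inclusion of $i$. The paper takes this from the cofiber sequence
\[
 \cO_{\cM}^{\mathrm{top}}[1]\xrightarrow{\ \mathrm{tr}\ }p_*\cO_{\cE}^{\mathrm{top}}\longrightarrow p_*\cO_{\cE}^{\mathrm{top}}(e)
\]
in the proof of \cite[Theorem~10.1, equation (7)]{GM23}, which forces $i\circ\mathrm{tr}=0$. Your abstract ideal-sheaf definition of $\cO_{\cE}^{\mathrm{top}}(-e)$ in (i) gives no handle on how the transfer interacts with $\cO\to\cO(e)$. That interaction has to come from the equivariant construction, where the transfer lives.

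The point is not cosmetic. Suppose $y$ were $\eta$. Then the vanishing of $i\circ\res_g$ in Proposition~\ref{prop:alpha-zero} would read $\pm\alpha+\eta\beta=0$. This gives $\alpha=\eta$, and hence $\dclass_H=\alpha+\eta\beta=0$. The whole hyperbolic computation rests on the two rows of (iii) differing precisely in their transfer entries.
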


\begin{proof}
Gepner--Meier construct circle-equivariant elliptic cohomology for
an arbitrary oriented spectral elliptic curve and prove that the
unit and degree-shifting transfer give
$p_*\cO_{\cE}^{\mathrm{top}}\simeq\cO_{\cM}^{\mathrm{top}}\oplus\cO_{\cM}^{\mathrm{top}}[1]$.
In the same proof, a relative descent calculation yields
$p_*\cO_{\cE}^{\mathrm{top}}(e)\simeq\cO_{\cM}^{\mathrm{top}}$
\cite[Theorem~10.1]{GM23}. Bauer--Meier construct the sheaf
$\cO_{\cE}^{\mathrm{top}}(-e)$ from the representation sphere and
obtain the sheaf-level cofiber sequence~\eqref{eq:public-divisor}
\cite[Theorem~3.1 and (3.2)]{BM25}. After this sequence is pushed
forward, the unit--transfer splitting identifies the remaining
fiber with $\cO_{\cM}^{\mathrm{top}}[1]$.

For the rows in part~\textup{(iii)}, the unit restricts to the
identity, while the transfer--restriction composite is the
Hurewicz image $\eta$ of the framed circle
\cite[(3.8)--(3.9)]{BM25}. The map to the positive divisor
pushforward comes from a second sequence,
\[
 \cO_{\cM}^{\mathrm{top}}[1]\xrightarrow{\mathrm{transfer}}
 p_*\cO_{\cE}^{\mathrm{top}}\longrightarrow
 p_*\cO_{\cE}^{\mathrm{top}}(e),
\]
which appears in the proof of \cite[Theorem~10.1, equation (7)]{GM23}.
Its quotient map is projection to the unit summand, up to sign.
The final map obtained by pushing forward~\eqref{eq:public-divisor}
is different: it is restriction with row $(1\ \eta)$.

Part~\textup{(iv)} follows from the GKMP rank-one normalization and
its dual by the projection formula. The formulas for the
classical lines express the corresponding underlying integral
normalizations.
\end{proof}

\begin{remark}\label{rem:GM-status}
GKMP Lemma~5.1 cites a separate item, ``Equivariant Elliptic
Cohomology with Integral Coefficients,'' which is listed there as
in preparation. The public paper \cite{GM23} and the later public
treatment \cite{BM25} suffice to derive the special rank-one and
identity-divisor statements in Proposition~\ref{prop:public-rankone},
and hence Hypotheses~\ref{contract:foundational}\textup{(L5)}.
These sources do not supply the arbitrary-lattice assignment or
the finite compatibility data specified in
clauses~\textup{(L1)--(L4)}.
\end{remark}

\begin{remark}[Rank-one inputs to the calculations]\label{rem:L5-redundancy}
Section~\ref{sec:H-class} uses the divisor sequence and the two
restriction rows in Proposition~\ref{prop:public-rankone} to
compute the isotropic restriction. Section~\ref{sec:negative}
uses its arbitrary-base pushforward formulas, not only the
corresponding global-section modules.
\end{remark}

We use proper base change, the projection formula, K\"unneth for
external products, and naturality of the relative
Postnikov/descent spectral sequence as standard formal properties
of perfect quasicoherent modules on the spectral stacks under
consideration. The proof of \cite[Theorem~10.1]{GM23} exhibits the
corresponding relative-descent and projection-formula calculation
in the rank-one case.

We also use the following published algebraic and topological inputs:
\begin{equation}\label{eq:low-stems}
 \pi_{-1}\TMF=\pi_{-2}\TMF=0,
 \qquad
 (\pi_0\TMF)^\times=\{\pm1\},
\end{equation}
\begin{equation}\label{eq:eta-relations}
 2\eta=0,
 \qquad
 \eta^3=12\nu,
 \qquad
 \eta^4=0,
\end{equation}
and the rank-one closed values
\begin{equation}\label{eq:rankone-closed}
 \dclass_{\langle1\rangle}=0,
 \qquad
 \dclass_{\langle-1\rangle}=\pm\nu.
\end{equation}
The low stems and Tate-kernel entries are tabulated in
\cite[Appendix~C]{TY}; the relations in~\eqref{eq:eta-relations}
are the standard Hopf-element relations in $\TMF$.
The first equality in~\eqref{eq:rankone-closed} follows from
degree, and the second is GKMP Example~8.4 \cite{GKMP}.

\section{The source module of the hyperbolic plane}\label{sec:H-shift}

The source module of the even lattice $H$, which is not
diagonalizable over $\Z$, can be computed by adding one rank-one
summand and applying an actual integral isometry. The added
summand is retained throughout the calculation.

\begin{lemma}\label{lem:H-shift}
There is an equivalence
\[
 L^H\simeq\TMF[1].
\]
\end{lemma}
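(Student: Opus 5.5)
We plan to compute $L^H$ by the stabilization trick already used in Corollary~\ref{cor:unimodular-shift}, while keeping track of the added coordinate as the section preamble announces. The form $H$ is even, unimodular and indefinite with $b_+=b_-=1$. Adjoining the rank-one form $\langle1\rangle$ produces $H\oplus\langle1\rangle$, which is odd, indefinite and unimodular of signature $(2,1)$. By the classification of odd indefinite unimodular forms \cite[Chapter~II]{MH}, there is an actual integral isometry
\[
 H\oplus\langle1\rangle\cong\langle1\rangle\oplus\langle1\rangle\oplus\langle-1\rangle .
\]
We can also write it down explicitly. If $e,f$ is the hyperbolic basis and $g$ spans $\langle1\rangle$, then $g$, $e+f-g$ and $e-g$ have squares $1$, $1$ and $-1$. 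They are pairwise orthogonal and span the lattice by a unimodular change of basis. Writing the isometry down explicitly is useful later, because the restriction along an isotropic vector must be transported through it.

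Next we apply Proposition~\ref{prop:module-cancel} with $b=H$, $b'=\langle1\rangle\oplus\langle-1\rangle$ and $c=\langle1\rangle$. By Proposition~\ref{prop:public-rankone}(iv), $L^{\langle1\rangle}\simeq\TMF[-2]$ is invertible. The cancellation therefore gives $L^H\simeq L^{\langle1\rangle\oplus\langle-1\rangle}$. The direct-sum Künneth equivalence from (L1) identifies the latter with $L^{\langle1\rangle}\otimes_{\TMF}L^{\langle-1\rangle}$. The rank-one values then give
\[
 L^{\langle1\rangle}\otimes_{\TMF}L^{\langle-1\rangle}\simeq\TMF[-2]\otimes_{\TMF}\TMF[3]\simeq\TMF[1].
\]
This agrees with the degree formula $3b_--2b_+=1$ of Corollary~\ref{cor:unimodular-shift}. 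Indeed, the lemma is formally the case $b=H$ of that corollary, with $\varepsilon=1$.

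The only step needing care is the input to Proposition~\ref{prop:module-cancel}. It requires an actual isometry, so that the induced automorphism of $\cE\otimes\Z^3$ over $\cM$ identifies the Looijenga lines via (L1), together with the Künneth splitting for the orthogonal sum. Both are supplied by Hypotheses~\ref{contract:foundational}. I expect no genuine obstacle here. The subtle point, deferred to the hyperbolic restriction computation, is that the resulting equivalence is noncanonical, being determined only up to $\pm1$. One must therefore carry the explicit isometry with the retained $\langle1\rangle$ coordinate, rather than use only the abstract shift, when computing the zero-section map.
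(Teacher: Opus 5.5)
Your argument is correct and is essentially the paper's proof. The paper also adjoins $\langle1\rangle$ and uses an integral isometry $H\oplus\langle1\rangle\cong\langle1\rangle^{\oplus2}\oplus\langle-1\rangle$. It carries out the cancellation by hand as $L^H[-2]\simeq L^{H\oplus\langle1\rangle}\simeq\TMF[-2-2+3]=\TMF[-1]$, which is exactly what Proposition~\ref{prop:module-cancel} packages.

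There is one concrete error, though it is not fatal because you also invoke the Milnor--Husemoller classification for existence: your explicit vectors are wrong. With $e^2=f^2=0$, $e\cdot f=1$ and $g^2=1$, one gets $(e+f-g)^2=3$ and $(e-g)^2=1$. Moreover $g\cdot(e+f-g)=g\cdot(e-g)=-1$, so the vectors are not orthogonal. In fact no diagonalizing basis can contain $g$, since $g^\perp=H$ is even. A correct choice is $e+g$, $f-g$, $e-f+g$, which are pairwise orthogonal with squares $1,1,-1$; these are the rows of the paper's matrix $P$.

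Your closing remark also overstates what is needed later. The hyperbolic calculation uses this lemma only to identify $\Hom$ groups. The $\pm1$ ambiguity is absorbed because $\beta=\pm1$ and $2\eta=0$, so no explicit isometry has to be transported.
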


\begin{proof}
The canonical rank-one shift gives
\[
 L^H[-2]\simeq L^{H\oplus\langle1\rangle}.
\]
Set
\[
 P=\begin{pmatrix}
 1&0&1\\
 0&1&-1\\
 1&-1&1
 \end{pmatrix}.
\]
The matrix satisfies $\det P=-1$ and
\[
 P(H\oplus\langle1\rangle)P^{\mathsf T}
 =\operatorname{diag}(1,1,-1).
\]
Applying the rank-one shifts to the resulting diagonal form gives
\[
 L^{H\oplus\langle1\rangle}
 \simeq
 L^{\langle1\rangle}\otimes L^{\langle1\rangle}\otimes L^{\langle-1\rangle}
 \simeq\TMF[-1].
\]
Hence $L^H[-2]\simeq\TMF[-1]$, which is equivalent to the
claimed shift.
\end{proof}

\section{Direct calculation of the hyperbolic class}\label{sec:H-class}

Restriction to an isotropic line passes through the state module
of the zero form on a rank-one lattice. The two summands of this
module reduce the calculation to two component maps, each of
which is determined by a different divisor construction. The
divisor defining the isotropic line determines the second,
while multiplication by the section of the other coordinate
divisor determines the first.

Let $g:\Z\to\Z^2$ send $1$ to the first standard basis vector.
Since $g^*H=(0)$, restriction factors the zero-section map as
\begin{equation}\label{eq:isotropic-factor}
 L^H\xrightarrow{\res_g}L^{(0)}\xrightarrow{e^*}\TMF.
\end{equation}
Using the splitting and restriction row of
Proposition~\ref{prop:public-rankone}(iii), write
\[
 L^{(0)}\simeq\TMF\oplus\TMF[1],
 \qquad
 e^*=(1\ \eta),
\]
and decompose the isotropic restriction into its components:
\begin{equation}\label{eq:alpha-beta}
 \res_g=\binom{\alpha}{\beta}.
\end{equation}
The closed class is therefore
\begin{equation}\label{eq:dH-alpha-beta}
 \dclass_H=\alpha+\eta\beta.
\end{equation}

\subsection{The second component is a unit}

The map $g$ induces the Cartier divisor
\[
 \iota:\cE\hookrightarrow\cE^2,
 \qquad
 x\longmapsto(x,e).
\]
Tensoring the divisor sequence by $\cL_H$ and then taking global
sections yields a fiber sequence
\begin{equation}\label{eq:beta-triangle}
 F\longrightarrow L^H\xrightarrow{\res_g}L^{(0)}\longrightarrow F[1],
\end{equation}
where
\[
 F\simeq L^K[-2],
 \qquad
 K=\begin{pmatrix}0&1\\1&-1\end{pmatrix}.
\]

\begin{lemma}\label{lem:K-shift}
One has $L^K\simeq\TMF[1]$ and hence $F\simeq\TMF[-1]$.
\end{lemma}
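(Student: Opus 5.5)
The plan is to compute $L^K$ by exhibiting an explicit integral isometry of $K$ with the diagonal form $\langle1\rangle\oplus\langle-1\rangle$, and then apply the rank-one shifts. Unlike the hyperbolic plane, no auxiliary rank-one summand is needed: $K$ has $\det K=-1$ and is odd, since its second basis vector has square $-1$. Moreover, $K$ is already diagonalizable over $\Z$ in rank two.

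Concretely, take the basis $w_1=(1,1)$, $w_2=(0,1)$ of $\Z^2$, whose change-of-basis matrix
\[
 Q=\begin{pmatrix}1&1\\0&1\end{pmatrix}
\]
has determinant $1$. A direct check gives
\[
 w_1^{\mathsf T}Kw_1=2-1=1,\qquad w_2^{\mathsf T}Kw_2=-1,\qquad w_1^{\mathsf T}Kw_2=0,
\]
so that $QKQ^{\mathsf T}=\operatorname{diag}(1,-1)$. (Alternatively, $K$ is odd, indefinite and unimodular with $b_+=b_-=1$, so Corollary~\ref{cor:unimodular-shift} applies directly.)

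By Hypotheses~\ref{contract:foundational}\textup{(L1)}, this actual isometry induces an automorphism of $\cE^2$ over $\cM$ carrying $\cL_K$ to $\cL_{\langle1\rangle\oplus\langle-1\rangle}$. Pullback preserves derived global sections, as in the proof of Proposition~\ref{prop:module-cancel}. The Künneth equivalence for the orthogonal direct sum, together with the rank-one values of Proposition~\ref{prop:public-rankone}(iv), then yields
\[
 L^K\simeq L^{\langle1\rangle}\otimes_{\TMF}L^{\langle-1\rangle}\simeq\TMF[-2]\otimes_{\TMF}\TMF[3]\simeq\TMF[1].
\]
Finally, the identification $F\simeq L^K[-2]$ from the fiber sequence~\eqref{eq:beta-triangle} gives $F\simeq\TMF[-1]$.

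The only point requiring care is that the identification $F\simeq L^K[-2]$ is correct. Tensoring $\cO(-\iota)$ with $\cL_H$ should produce the line for $H$ minus the divisor class of $\{y=e\}$, which is encoded by the form $-\langle1\rangle$ on the second coordinate. This gives $K$ together with a shift $[-2]$ coming from the normalization $\cL_{\langle1\rangle}\simeq\cO(e)[-2]$. Since the statement takes this identification as given in the preceding text, the lemma itself reduces to the isometry computation above, which is routine.
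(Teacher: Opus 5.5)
Your proof is correct and follows the paper's argument: the paper also uses the integral basis $(1,1),(0,1)$ to identify $K$ with $\langle1\rangle\oplus\langle-1\rangle$, and then applies the rank-one shifts $\TMF[-2]\otimes_{\TMF}\TMF[3]\simeq\TMF[1]$. Your closing check of $F\simeq L^K[-2]$ is consistent with the paper's setup but goes beyond what the lemma itself requires.
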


\begin{proof}
In the integral basis $(1,1),(0,1)$, the form $K$ is
$\langle1\rangle\oplus\langle-1\rangle$, so the rank-one shifts
give the result.
\end{proof}

\begin{proposition}\label{prop:beta-unit}
The component $\beta$ in~\eqref{eq:alpha-beta} is $\pm1$.
\end{proposition}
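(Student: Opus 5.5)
We will use the fiber sequence~\eqref{eq:beta-triangle} together with Lemma~\ref{lem:H-shift} and Lemma~\ref{lem:K-shift}. Identifying the terms, it reads
\[
 \TMF[-1]\longrightarrow \TMF[1]\xrightarrow{\binom{\alpha}{\beta}}\TMF\oplus\TMF[1]\longrightarrow\TMF.
\]
By~\eqref{eq:Hom-convention}, the components of $\res_g$ lie in $\alpha\in\pi_1\TMF$ and $\beta\in\pi_0\TMF$. The boundary map $\TMF\oplus\TMF[1]\to F[1]\simeq\TMF$ has components $\gamma\in\pi_0\TMF$ and $\delta\in\pi_1\TMF$. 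The map $F\to L^H$ is a class $\phi\in\pi_{-2}\TMF$, so $\phi=0$ by~\eqref{eq:low-stems}. Hence $\res_g$ is a split monomorphism on homotopy. More precisely, the cofiber sequence splits:
\[
 L^{(0)}\simeq L^H\oplus F[1],
\]
with $\res_g$ the inclusion of a summand.

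The plan is to read off $\beta$ from this splitting. Since $F\to L^H$ is null, there is a retraction $r:L^{(0)}\to L^H$ with $r\circ\res_g=\mathrm{id}_{L^H}$. Write $r=(r_1\ r_2)$ with $r_1:\TMF\to\TMF[1]$, which lies in $\pi_{-1}\TMF=0$, and $r_2:\TMF[1]\to\TMF[1]$, which lies in $\pi_0\TMF$. Then
\[
 \mathrm{id}=r_1\alpha+r_2\beta=r_2\beta
\]
in $\pi_0\TMF$, so $\beta$ is a unit of $\pi_0\TMF$. By~\eqref{eq:low-stems} the units are $\pm1$, so $\beta=\pm1$.

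The only delicate point is identifying the terms of~\eqref{eq:beta-triangle} with the correct suspensions, so that the degree bookkeeping lands exactly in $\pi_{-2}$ and $\pi_{-1}$. This requires four checks:
\begin{enumerate}[label=\textup{(\alph*)},leftmargin=2.6em]
\item $F=\Gamma(\cL_H(-\iota))$ is the module of the form $K$ shifted by $[-2]$. This uses (L1) and the rank-one normalization (L2): twisting by the negative of the divisor $\iota$ changes the form by subtracting $1$ in the second diagonal coordinate, and the resulting $\omega$-twist gives the shift.
\item Tensoring the divisor cofiber sequence~\eqref{eq:public-divisor}, pulled back along the projection to the second coordinate, is legitimate by Proposition~\ref{prop:public-rankone}(i) and (L4).
\item $\Gamma(\iota^*\cL_H)=L^{g^*H}=L^{(0)}$ by (L1).
\item The relevant map in the sequence is indeed $\res_g$.
\end{enumerate}
Once these identifications are granted, the argument is pure degree counting with $\pi_{-1}\TMF=\pi_{-2}\TMF=0$.
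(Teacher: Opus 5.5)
Your proof is correct and is essentially the paper's argument: $\pi_{-2}\TMF=0$ makes the map $F\simeq\TMF[-1]\to L^H\simeq\TMF[1]$ null, so \eqref{eq:beta-triangle} splits and yields a retraction; $\pi_{-1}\TMF=0$ kills its first coordinate; and $r_2\beta=1$ together with $(\pi_0\TMF)^\times=\{\pm1\}$ gives $\beta=\pm1$. Your checks (a)--(d) are the identifications the paper takes from Hypotheses~\ref{contract:foundational} and Lemmas~\ref{lem:H-shift} and~\ref{lem:K-shift}.
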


\begin{proof}
By Lemmas~\ref{lem:H-shift} and~\ref{lem:K-shift}, the first
map in~\eqref{eq:beta-triangle} lies in
\[
 \Hom_{\TMF}(\TMF[-1],\TMF[1])
 \cong\pi_{-2}\TMF=0.
\]
The triangle therefore splits, giving a retraction of $\res_g$,
\[
 r=(r_0,r_1):\TMF\oplus\TMF[1]\longrightarrow\TMF[1].
\]
The first coordinate vanishes because
\[
 r_0\in\Hom_{\TMF}(\TMF,\TMF[1])
 \cong\pi_{-1}\TMF=0.
\]
The identity $r_1\beta=1$ follows, showing that $\beta$ is a
unit of $\pi_0\TMF$ and hence $\beta=\pm1$.
\end{proof}

\begin{remark}
The splitting supplied by the vanishing of $\pi_{-2}\TMF$ does
not alone imply that the second coordinate is invertible.
The further vanishing $\pi_{-1}\TMF=0$ forces $r_0=0$, from
which $r_1\beta=1$ follows.
\end{remark}

\subsection{The first component vanishes}

Let
\[
 i:L^{(0)}=\Gamma(\cO_{\cE}^{\mathrm{top}})
 \longrightarrow
 \Gamma(\cO_{\cE}^{\mathrm{top}}(e))\simeq\TMF
\]
be the map induced by the canonical inclusion. In the same
splitting, Proposition~\ref{prop:public-rankone}(iii) identifies
its row as
\begin{equation}\label{eq:i-row}
 i=(\pm1\ 0),
\end{equation}
so $i\circ\res_g=\pm\alpha$.

Let $s_1$ be the canonical section of
$\operatorname{pr}_1^*\cO_{\cE}(e)$ on $\cE^2$.
By naturality, the composite $i\circ\res_g$ factors as
\begin{equation}\label{eq:alpha-factor}
 L^H\xrightarrow{\Gamma(s_1)}L^J[2]
 \longrightarrow\Gamma(\cO_{\cE}^{\mathrm{top}}(e)),
 \qquad
 J=\begin{pmatrix}1&1\\1&0\end{pmatrix}.
\end{equation}

\begin{lemma}\label{lem:J-shift}
One has $L^J\simeq\TMF[1]$.
\end{lemma}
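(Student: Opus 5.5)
The plan is to follow the pattern of Lemma~\ref{lem:K-shift}: find an integral change of basis that diagonalizes $J$ over $\Z$, then read off the shift from the rank-one normalizations in Proposition~\ref{prop:public-rankone}(iv).

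The form $J=\begin{pmatrix}1&1\\1&0\end{pmatrix}$ has determinant $-1$. Its first basis vector $e_1$ has square $1$, so it is odd and indefinite. By the classification cited in Corollary~\ref{cor:unimodular-shift}, it is therefore isometric to $\langle1\rangle\oplus\langle-1\rangle$.

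To exhibit an explicit isometry, take the basis $v_1=(1,0)$ and $v_2=(1,-1)$.
\begin{itemize}
\item $J(v_1,v_1)=1$.
\item $J(v_1,v_2)=1-1=0$.
\item $J(v_2,v_2)=1-2+0=-1$.
\end{itemize}
The change-of-basis matrix has determinant $-1$, so it lies in $\mathrm{GL}_2(\Z)$, and in this basis $J$ becomes $\langle1\rangle\oplus\langle-1\rangle$.

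By (L1), this actual isometry induces an automorphism of $\cE^2$ over $\cM$ together with an identification of the pulled-back line with $\cL_{\langle1\rangle\oplus\langle-1\rangle}$. Pullback by an automorphism preserves derived global sections, as in the proof of Proposition~\ref{prop:module-cancel}. Hence $L^J\simeq L^{\langle1\rangle\oplus\langle-1\rangle}$.

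The direct-sum external-product equivalence in (L1) then gives
\[
L^J\simeq L^{\langle1\rangle}\otimes_{\TMF}L^{\langle-1\rangle}.
\]
Using $L^{\langle1\rangle}\simeq\TMF[-2]$ and $L^{\langle-1\rangle}\simeq\TMF[3]$ from \eqref{eq:public-rankone-results}, this is $\TMF[-2]\otimes_{\TMF}\TMF[3]\simeq\TMF[1]$. The result agrees with Corollary~\ref{cor:unimodular-shift} for $b_+=b_-=1$, which could be cited directly instead.

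No step is a real obstacle. The only point needing care is to verify the matrix identity exactly, so that the isometry is integral and not merely rational, because (L1) only provides equivalences for actual lattice isometries.
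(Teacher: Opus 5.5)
Your proposal is correct and matches the paper's proof: it uses the same integral basis $(1,0),(1,-1)$ (the columns of $B_J$ in Appendix~\ref{app:certificates}) to identify $J$ with $\langle1\rangle\oplus\langle-1\rangle$, and then the rank-one shifts $-2+3$ give $\TMF[1]$. Your appeals to (L1) for isometry invariance and for the external-product K\"unneth step only make explicit what the paper's one-line proof leaves implicit.
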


\begin{proof}
In the integral basis $(1,0),(1,-1)$, the form $J$ is
$\langle1\rangle\oplus\langle-1\rangle$.
\end{proof}

\begin{proposition}\label{prop:alpha-zero}
The component $\alpha$ in~\eqref{eq:alpha-beta} is zero.
\end{proposition}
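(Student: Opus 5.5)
The plan is to use the factorization \eqref{eq:alpha-factor} together with \eqref{eq:i-row}. These give
\[
 \pm\alpha = i\circ\res_g = \phi\circ\Gamma(s_1),
\]
where $\phi:L^J[2]\to\Gamma(\cO_{\cE}^{\mathrm{top}}(e))\simeq\TMF$ is the second map in \eqref{eq:alpha-factor}. The argument is then a degree count in the same style as Proposition~\ref{prop:beta-unit}.

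First I identify the source and target of each factor as suspensions of $\TMF$. By Lemma~\ref{lem:H-shift}, $L^H\simeq\TMF[1]$. By Lemma~\ref{lem:J-shift}, $L^J[2]\simeq\TMF[3]$. The target $\Gamma(\cO_{\cE}^{\mathrm{top}}(e))$ is $\TMF$ by Proposition~\ref{prop:public-rankone}(ii). Using the convention \eqref{eq:Hom-convention}, the first map $\Gamma(s_1)$ lies in
\[
\Hom_{\TMF}(\TMF[1],\TMF[3])\cong\pi_{-2}\TMF=0.
\]
This vanishing is recorded in \eqref{eq:low-stems}. So $\Gamma(s_1)$ is nullhomotopic, the composite vanishes, and $\pm\alpha=0$, hence $\alpha=0$. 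The second map $\phi$ lies in $\pi_3\TMF$, and we do not need to know it.

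The main obstacle is checking that the shift in \eqref{eq:alpha-factor} is correct, because the whole argument depends on the degree of the intermediate module. Adding the divisor $\operatorname{pr}_1^*(e)$ adds the form $\langle1\rangle$ on the first coordinate, and this turns $H$ into $J$. The section $s_1:\cO\to\operatorname{pr}_1^*\cO_{\cE}(e)$ is tensored with $\cL_H$, and (L2) normalizes $\cL_{\langle1\rangle}=\cO^{\mathrm{top}}(e)[-2]$. Together these give the target $\cL_J[2]$, which accounts for the $+2$. I would verify this bookkeeping using the tensor compatibility in (L1) and the factorization in (L4). I would also check that the second map really lands in $\Gamma(\cO^{\mathrm{top}}_{\cE}(e))$ after restricting along $g$. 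This holds because $g^*J=\langle1\rangle$, so (L2) gives $\Gamma(g^*\cL_J[2])\simeq\Gamma(\cO^{\mathrm{top}}_{\cE}(e))$. The vanishing is not sensitive to the sign conventions in these identifications, since only the group $\pi_{-2}\TMF=0$ enters.
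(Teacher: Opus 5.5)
Your proposal is correct and is essentially the paper's proof. The paper likewise places the first arrow $\Gamma(s_1)$ of \eqref{eq:alpha-factor} in $\Hom_{\TMF}(\TMF[1],\TMF[3])\cong\pi_{-2}\TMF=0$, concludes $i\circ\res_g=0$, and uses the row \eqref{eq:i-row} to get $\alpha=0$. Your check of the shift bookkeeping ($J=H+\operatorname{pr}_1^*\langle1\rangle$ and $g^*J=\langle1\rangle$) agrees with what the paper takes from (L1), (L2) and (L4).
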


\begin{proof}
The first arrow in~\eqref{eq:alpha-factor} lies in
\[
 \Hom_{\TMF}(\TMF[1],\TMF[3])
 \cong\pi_{-2}\TMF=0.
\]
Consequently $i\circ\res_g=0$, and the row~\eqref{eq:i-row}
implies $\alpha=0$.
\end{proof}

\begin{proof}[Proof of Theorem~\ref{thm:intro-main}\textup{(i)}]
Equation~\eqref{eq:dH-alpha-beta} and
Propositions~\ref{prop:beta-unit} and~\ref{prop:alpha-zero} give
\[
 \dclass_H=\eta(\pm1)=\eta,
\]
because $2\eta=0$.
\end{proof}

\section{Hyperbolic stabilization and indefinite forms}\label{sec:stabilization}

Direct-sum compatibility extends the preceding local calculation to
arbitrary unimodular forms. Multiplication by a positive power of $\eta$
eliminates the only ambiguity, namely the choice of unit in the source
suspension, so that the stabilization formula is an equality of classes
with no residual sign.

\begin{theorem}[Hyperbolic stabilization]\label{thm:stabilization}
For every unimodular form $b$ and every $r\ge1$,
\begin{equation}\label{eq:stabilization}
 \boxed{\dclass_{b\oplus H^{\oplus r}}=\eta^r\dclass_b.}
\end{equation}
There is no residual sign ambiguity.
\end{theorem}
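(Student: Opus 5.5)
The plan is to induct on $r$, using the direct-sum compatibility (L3) in the form~\eqref{eq:multiplicativity-sign} together with part~(i), $\dclass_H=\eta$. For $r=1$, (L3) applied to the orthogonal sum $b\oplus H$ gives $\dclass_{b\oplus H}=\pm\dclass_b\dclass_H=\pm\eta\dclass_b$. Since $2\eta=0$ by~\eqref{eq:eta-relations}, we have $-\eta\dclass_b=\eta\dclass_b$, so the sign disappears. For the inductive step, apply (L3) to the decomposition $(b\oplus H^{\oplus(r-1)})\oplus H$:
\[
 \dclass_{b\oplus H^{\oplus r}}=\pm\dclass_{b\oplus H^{\oplus(r-1)}}\,\eta=\pm\eta^{r-1}\dclass_b\,\eta=\pm\eta^r\dclass_b .
\]
Again $2\eta^r=0$ for $r\ge1$ removes the sign. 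The products take place in the graded-commutative ring $\pi_*\TMF$. Any Koszul sign from reordering $\eta$ past $\dclass_b$ is another $\pm1$, and it is absorbed in the same way.

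The one point needing care is what "no residual sign ambiguity" means, since each $\dclass$ is defined only up to the unit $\pm1$ from Corollary~\ref{cor:unimodular-shift}. I would say explicitly how this is read. The left side $\dclass_{b\oplus H^{\oplus r}}$ is a class determined up to sign. The right side $\eta^r\dclass_b$ is independent of the sign choice for $\dclass_b$, because $\eta^r(-\dclass_b)=\eta^r\dclass_b$. The left side is then a $2$-torsion element, being $\eta$ times something, so its two possible representatives agree. Hence both sides are honest elements of $\pi_{3b_- -2b_+ + r}\TMF$, and they are equal. This is consistent with the degree count: $H^{\oplus r}$ contributes $b_\pm$ increments of $r$ each, which gives a shift of $3r-2r=r$, matching $\deg\eta^r$.

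Finally, I would record the "in particular" clause. It follows from $\eta^4=0$ in~\eqref{eq:eta-relations}: for $r\ge4$, $\dclass_{b\oplus H^{\oplus r}}=\eta^{r}\dclass_b=0$. I expect no real obstacle here, since everything is formal once part~(i) and (L3) are in hand. The only subtlety is keeping the sign bookkeeping honest, namely that (L3) provides the product formula for the specific orthogonal decomposition used, with no hidden isometry needed.
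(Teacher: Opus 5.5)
Your proposal is correct and follows the paper's proof: the case $r=1$ comes from direct-sum compatibility together with $\dclass_H=\eta$, the sign is removed by $2\eta=0$, and the general case follows by iteration. Your remarks on the Koszul sign, on the $2$-torsion of the left side, and on the degree shift $3r-2r=r$ spell out bookkeeping that the paper's proof leaves implicit; they do not change the argument.
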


\begin{proof}
For $r=1$, direct-sum compatibility gives
\[
 \dclass_{b\oplus H}=\pm\dclass_b\dclass_H=\pm\eta\dclass_b.
\]
The sign is immaterial because $-\eta\dclass_b=\eta\dclass_b$.
Iteration then proves the formula.
\end{proof}

\begin{corollary}[Pure hyperbolic forms]\label{cor:pure-hyperbolic}
For every $r\ge1$,
\[
 \dclass_{H^{\oplus r}}=\eta^r.
\]
Thus
\[
 \dclass_H=\eta,
 \quad
 \dclass_{2H}=\eta^2,
 \quad
 \dclass_{3H}=\eta^3=12\nu,
 \quad
 \dclass_{rH}=0\quad(r\ge4).
\]
More generally, $\dclass_{b\oplus4H}=0$ for every unimodular $b$.
\end{corollary}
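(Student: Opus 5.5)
The plan is to specialize Theorem~\ref{thm:stabilization} to the rank-zero form and then read off the values from the Hopf relations in~\eqref{eq:eta-relations}. First I pin down $\dclass_0$ for the zero lattice $b=0$ on $\Z^0$. Here $\cE^0=\cM$, the Looijenga line is the structure sheaf, $L^0\simeq\TMF$, and zero-section restriction is the identity. Hence $\dclass_0=\pm1$; this is the rank-zero case of Corollary~\ref{cor:unimodular-shift}, which was called immediate. Since $0\oplus H^{\oplus r}=H^{\oplus r}$, the boxed formula~\eqref{eq:stabilization} gives
\[
 \dclass_{H^{\oplus r}}=\eta^r\dclass_0=\pm\eta^r=\eta^r,
\]
with the sign absorbed because $2\eta=0$ and $r\ge1$. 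As a cross-check, one can avoid the rank-zero lattice altogether. Start from $\dclass_H=\eta$ (Theorem~\ref{thm:intro-main}(i)) and apply~\eqref{eq:stabilization} with $b=H$ and $r-1$ in place of $r$, for $r\ge2$. This gives $\dclass_{H^{\oplus r}}=\eta^{r-1}\eta=\eta^r$. I would present this version as the main argument, since it uses only results already proved.

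The listed values then follow by substitution. We have $\dclass_H=\eta$ and $\dclass_{2H}=\eta^2$. For $3H$ we get $\dclass_{3H}=\eta^3=12\nu$ by the relation $\eta^3=12\nu$ in~\eqref{eq:eta-relations}. For $r\ge4$, $\eta^r=\eta^{r-4}\eta^4=0$ because $\eta^4=0$.

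For the general statement, I apply~\eqref{eq:stabilization} with $r=4$ to an arbitrary unimodular $b$. This gives $\dclass_{b\oplus4H}=\eta^4\dclass_b=0$. The vanishing holds regardless of the sign or the value of $\dclass_b$, and it holds in whatever degree $\pi_{3b_--2b_+}\TMF$ the class lies in.

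I expect no real obstacle. The only points needing care are that the sign ambiguity of~\eqref{eq:multiplicativity-sign} disappears because each statement involves at least one factor of $\eta$, and that the identification $\eta^3=12\nu$ is an input quoted from~\eqref{eq:eta-relations}, not something proved here.
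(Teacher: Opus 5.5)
Your proposal is correct and is essentially the paper's proof, which applies the stabilization formula to the rank-zero class $\dclass_0=1$ and then reads off the listed values from $\eta^3=12\nu$ and $\eta^4=0$. Your preferred variant, starting from $\dclass_H=\eta$ with $b=H$ for $r\ge2$, only rearranges the same argument, and it sidesteps the rank-zero lattice.
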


\begin{proof}
Applying the stabilization formula to the rank-zero class
$\dclass_0=1$, together with~\eqref{eq:eta-relations}, gives the
assertions.
\end{proof}

Let $E_8$ denote the positive-definite even unimodular lattice of
rank eight.

\begin{theorem}[Indefinite unimodular forms]\label{thm:indefinite}
Let $b$ be indefinite and unimodular.
\begin{enumerate}[label=\textup{(\roman*)},leftmargin=2.5em]
\item If $b$ is odd, then $\dclass_b=0$.
\item If $b$ is even, put
\[
 r=\min(b_+,b_-),
 \qquad
 s=\frac{|b_+-b_-|}{8}.
\]
Then
\[
 \dclass_b=
 \begin{cases}
  \eta^r\dclass_{E_8}^{\,s},&b_+>b_-,\\
  \eta^r,&b_+=b_-,\\
  \eta^r\dclass_{-E_8}^{\,s},&b_->b_+.
 \end{cases}
\]
In particular, $\dclass_b=0$ if $r\ge4$.
\end{enumerate}
\end{theorem}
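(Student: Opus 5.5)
The plan is to reduce both parts to the classification of indefinite unimodular forms \cite[Chapter~II]{MH}, using the multiplicativity~\eqref{eq:multiplicativity-sign}, the stabilization formula of Theorem~\ref{thm:stabilization}, and the rank-one values~\eqref{eq:rankone-closed}. Throughout, isometric forms have the same closed class up to sign. An actual isometry induces an automorphism of $\cE^d$ over $\cM$ that commutes with the zero section, so by (L1) and (L3) the zero-section maps agree after composing with an equivalence of source modules. That equivalence is a unit $\pm1$ by Corollary~\ref{cor:unimodular-shift}.

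For part~(i), an odd indefinite unimodular form is isometric to $I_{b_+,b_-}=\langle1\rangle^{\oplus b_+}\oplus\langle-1\rangle^{\oplus b_-}$ with $b_+,b_-\ge1$. Multiplicativity then gives
\[
 \dclass_b=\pm\dclass_{\langle1\rangle}^{\,b_+}\dclass_{\langle-1\rangle}^{\,b_-}.
\]
Since $b_+\ge1$ and $\dclass_{\langle1\rangle}=0$, this product vanishes.

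For part~(ii), an even indefinite unimodular form is isometric to $rH\oplus sE_8$ when $b_+\ge b_-$, and to $rH\oplus s(-E_8)$ when $b_->b_+$. Here $r=\min(b_+,b_-)$ and $8s=|b_+-b_-|$, using $\sigma\equiv0\pmod 8$. Indefiniteness gives $r\ge1$. By Theorem~\ref{thm:stabilization},
\[
 \dclass_b=\pm\dclass_{rH\oplus sE_8}=\pm\eta^r\dclass_{sE_8}=\pm\eta^r(\pm\dclass_{E_8}^{\,s}).
\]
All signs are absorbed because $r\ge1$ and $2\eta=0$. The same argument applies with $-E_8$, and the case $b_+=b_-$ is Corollary~\ref{cor:pure-hyperbolic}. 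Finally, $\eta^4=0$ gives vanishing when $r\ge4$.

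The main point needing care is well-definedness under isometry, namely that replacing $b$ by an isometric form changes $\dclass_b$ only by a sign. I would justify this by naturality of zero-section restriction under the automorphism of $\cE^d$ induced by the isometry, as in the proof of Proposition~\ref{prop:module-cancel}. The rest is bookkeeping. In particular, one should check that the iterated multiplicativity for $sE_8$ only introduces signs, which $\eta^r$ then kills.
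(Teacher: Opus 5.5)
Your proposal is correct and follows the paper's proof. Both use the classification of indefinite unimodular forms. For odd forms, both use the vanishing of $\dclass_{\langle1\rangle}$. For even forms, both use Theorem~\ref{thm:stabilization}, with the signs absorbed by $\eta^r$ for $r\ge1$ and vanishing from $\eta^4=0$; your justification that isometric forms have classes agreeing up to a unit of $(\pi_0\TMF)^\times=\{\pm1\}$ simply spells out the isometry invariance the paper uses implicitly.
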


\begin{proof}
By the classification of indefinite unimodular forms
\cite[Chapter~II]{MH}, an odd indefinite unimodular form is
isometric to
\[
 I_{b_+,b_-}=\langle1\rangle^{\oplus b_+}\oplus\langle-1\rangle^{\oplus b_-}.
\]
Since $b_+>0$ and $\dclass_{\langle1\rangle}=0$, the class vanishes.

An even indefinite form decomposes, by the same classification, as a
direct sum of $r$ hyperbolic planes and $s$ copies of $E_8$ or $-E_8$,
according to the sign of the signature. Theorem~\ref{thm:stabilization}
gives the stated expression. The inequality $r\ge1$ ensures that the
factor $\eta^r$ removes every sign ambiguity, and the final vanishing
assertion follows from $\eta^4=0$.
\end{proof}

\begin{corollary}[Connected sums and blow-ups]\label{cor:connected-blowup}
For a closed simply connected oriented four-manifold $X$ and $r\ge1$,
\[
 Z\bigl(X\#r(S^2\times S^2)\bigr)=\eta^rZ(X).
\]
Moreover,
\[
 Z(X\#\overline{\CP}^{\,2})=0,
 \qquad
 Z(X\#\CP^2)=\pm\nu Z(X).
\]
\end{corollary}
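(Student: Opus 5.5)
The plan is to translate each connected sum into an orthogonal direct sum of intersection forms, and then apply the lattice results already proved.

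For a closed simply connected oriented four-manifold $X$ and $r\ge1$, the intersection form of the connected sum is the orthogonal sum of the intersection forms. This gives
\[
 Q_{X\#r(S^2\times S^2)}\cong Q_X\oplus H^{\oplus r},
 \qquad
 Q_{X\#\overline{\CP}^{\,2}}\cong Q_X\oplus\langle-1\rangle,
 \qquad
 Q_{X\#\CP^2}\cong Q_X\oplus\langle1\rangle .
\]
The sum of two unimodular forms is unimodular, so every $\dclass$ below is defined. Since $Z(Y)=\dclass_{-Q_Y}$, I negate each form, using $-(b\oplus c)=(-b)\oplus(-c)$ and $-H\cong H$. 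The isometry $-H\cong H$ is integral, given by $(x,y)\mapsto(x,-y)$. Isometries induce equivalences of Looijenga lines by (L1), and these are compatible with the zero-section trivializations by (L3). Hence $\dclass_{-H^{\oplus r}}=\dclass_{H^{\oplus r}}$, at least up to the sign already inherent in $\dclass$.

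\textbf{First formula.} We get $Z(X\#r(S^2\times S^2))=\dclass_{-Q_X\oplus H^{\oplus r}}$. Theorem~\ref{thm:stabilization}, applied with $b=-Q_X$, turns this into $\eta^r\dclass_{-Q_X}=\eta^rZ(X)$. Because $r\ge1$, the factor $\eta^r$ absorbs every sign, so this is an honest equality.

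\textbf{Blow-up formulas.} For $\overline{\CP}^{\,2}$, the negated form is $-Q_X\oplus\langle1\rangle$. Direct-sum compatibility \eqref{eq:multiplicativity-sign} gives $\dclass_{-Q_X\oplus\langle1\rangle}=\pm\dclass_{-Q_X}\dclass_{\langle1\rangle}$. This vanishes by \eqref{eq:rankone-closed}, since $\dclass_{\langle1\rangle}=0$. For $\CP^2$, the negated form is $-Q_X\oplus\langle-1\rangle$, and the same argument gives $\pm\dclass_{-Q_X}\cdot(\pm\nu)=\pm\nu Z(X)$.

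\textbf{Main obstacle.} The only subtle point is the sign bookkeeping. Both the stabilization law and the rank-one values hold only up to sign, and $-H$ must be identified with $H$. That identification uses the isometry invariance in (L1), applied exactly as in Proposition~\ref{prop:module-cancel}. In the $\CP^2$ case the sign cannot be removed in general, since $\nu$ has order $24$. This is why the statement keeps the $\pm$ there, while the $\eta^r$ and zero cases are sign-free.
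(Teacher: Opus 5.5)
Your proposal is correct and follows essentially the same route as the paper. Both use additivity of intersection forms under connected sum and negation with $-H\cong H$, then Theorem~\ref{thm:stabilization} applied to $b=-Q_X$, and direct-sum compatibility with the rank-one values $\dclass_{\langle1\rangle}=0$ and $\dclass_{\langle-1\rangle}=\pm\nu$; your explicit isometry $(x,y)\mapsto(x,-y)$ and the remark that $\eta^r$ absorbs the sign spell out what the paper's brief proof leaves implicit.
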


\begin{proof}
Intersection forms add under connected sum; together with
$Q_{S^2\times S^2}=H$ and $-H\cong H$, this gives the connected-sum
formula. For the blow-ups, negation of the intersection form converts the
$-1$ summand of $\overline{\CP}^{\,2}$ into $\langle1\rangle$, whose
class vanishes. It converts the $+1$ summand of $\CP^2$ into
$\langle-1\rangle$, whose class is $\pm\nu$.
\end{proof}

\begin{corollary}[Doubles with zero intersection pairing]\label{cor:doubles}
Let $W$ be a compact simply connected oriented four-manifold whose
intersection pairing on $H_2(W;\Z)/\mathrm{tors}$ is zero of rank $r$,
and assume that its double $DW$ is simply connected and
$Q_{DW}\cong H^{\oplus r}$. Then
\[
 Z(DW)=\eta^r.
\]
For this hyperbolic family, the conclusion holds without the
upside-down-cobordism assumption, relative to
Hypotheses~\ref{contract:foundational}.
\end{corollary}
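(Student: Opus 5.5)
The plan is to reduce the statement to Corollary~\ref{cor:pure-hyperbolic} by identifying the relevant lattice and tracking the orientation convention in \eqref{eq:intro-Z}. By hypothesis $DW$ is closed and simply connected, so $Z(DW)=\dclass_{-Q_{DW}}$ is defined. The hypothesis $Q_{DW}\cong H^{\oplus r}$ then gives $-Q_{DW}\cong (-H)^{\oplus r}$. The key lattice observation is that $-H\cong H$: the integral change of basis $(e_1,e_2)\mapsto(e_1,-e_2)$ has determinant $-1$ and carries $-H$ to $H$. Hence $-Q_{DW}\cong H^{\oplus r}$ by an actual integral isometry.

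Next I will pass from this lattice isometry to an equality of closed classes. Hypothesis~(L1) says that isometries induce equivalences of the Looijenga lines, and the induced automorphism of $\cE^{2r}$ fixes the zero section. Zero-section restriction is therefore preserved, and the isometry changes $\dclass$ by at most the unit $\pm1$ already implicit in its definition. Corollary~\ref{cor:pure-hyperbolic} then gives $\dclass_{-Q_{DW}}=\pm\eta^r$, and when $r\ge1$ the sign disappears because $2\eta=0$. The degenerate case $r=0$ gives the rank-zero form, with $Z(DW)=\dclass_0=1=\eta^0$.

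The only step I expect to need care is the compatibility of $\dclass$ with isometries, meaning that the trivialization of the zero-section restriction in (L3) is natural with respect to the automorphism of $\cE^{d}$ induced by a lattice isometry. I would justify this from (L1) and (L4) in the same way that the proof of Proposition~\ref{prop:module-cancel} uses pullback by the induced automorphism. Any residual ambiguity is a unit of $\pi_0\TMF$, which is $\pm1$ by \eqref{eq:low-stems}, and it is absorbed by $\eta$ as above. The final sentence of the statement about the cobordism assumption follows because this argument only invokes Theorem~\ref{thm:intro-main}(i) and Theorem~\ref{thm:stabilization}, neither of which uses GKMP Question~7.14. The geometric description of $W$ does not enter the calculation beyond guaranteeing the stated intersection form.
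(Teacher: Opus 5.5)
Your proposal is correct and follows essentially the same route as the paper's proof. The paper likewise notes that the hypothesis puts $DW$ in the domain of $Z$, uses $-H\cong H$ with $Z(DW)=\dclass_{-Q_{DW}}$, invokes Theorem~\ref{thm:stabilization} (of which Corollary~\ref{cor:pure-hyperbolic} is the rank-zero case), and treats $r=0$ via the empty form. Your explicit discussion of isometry invariance of $\dclass$ spells out a point the paper uses only implicitly.
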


\begin{proof}
The added hypothesis places $DW$ in the domain of $Z$.
Since $-H\cong H$, the definition $Z(DW)=\dclass_{-Q_{DW}}$
and Theorem~\ref{thm:stabilization} give the result; for $r=0$
the empty form has value one.
\end{proof}

\section{Positive-definite annihilation and the K3 class}\label{sec:positive}

Four hyperbolic summands annihilate every class. In the presence of a
positive-definite unimodular summand of positive rank, three suffice:
the class of that summand is annihilated by $\nu$, and $\eta^3=12\nu$.
The annihilation argument compares two evaluations of the class obtained
by adding one negative line, using isometric presentations of the
enlarged lattice. That lattice is retained throughout the argument.

\begin{theorem}[$\nu$-annihilation]\label{thm:nu-annihilation}
Let $b$ be positive definite and unimodular of positive rank. Then
\begin{equation}\label{eq:nu-annihilation}
 \boxed{\nu\dclass_b=0.}
\end{equation}
\end{theorem}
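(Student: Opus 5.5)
The plan is the one announced in the introduction: evaluate the closed class of the enlarged lattice $b\oplus\langle-1\rangle$ in two ways and compare. Write $n=\operatorname{rank} b\ge1$. Since $b$ is positive definite, $b_+=n$ and $b_-=0$, so $b\oplus\langle-1\rangle$ is indefinite. It is also odd, because the added summand $\langle-1\rangle$ has odd square. It is unimodular, because its determinant is $-\det b=\mp1$. The classification of odd indefinite unimodular forms \cite[Chapter~II]{MH} therefore gives an actual integral isometry
\[
 b\oplus\langle-1\rangle\cong I_{n,1}=\langle1\rangle^{\oplus n}\oplus\langle-1\rangle .
\]

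The first evaluation uses the original presentation. Direct-sum compatibility (L3), in the form \eqref{eq:multiplicativity-sign}, together with the rank-one value $\dclass_{\langle-1\rangle}=\pm\nu$ from \eqref{eq:rankone-closed}, gives
\[
 \dclass_{b\oplus\langle-1\rangle}=\pm\dclass_b\dclass_{\langle-1\rangle}=\pm\nu\dclass_b .
\]

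The second evaluation uses the diagonal presentation. Since $n\ge1$, the diagonal form contains a factor $\langle1\rangle$, and $\dclass_{\langle1\rangle}=0$. This is exactly the argument of Theorem~\ref{thm:indefinite}(i), so
\[
 \dclass_{I_{n,1}}=\pm\dclass_{\langle1\rangle}\cdot(\cdots)=0 .
\]

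To compare the two evaluations, I use that the closed class $\dclass_c$ depends only on the isometry class of $c$, up to the unit ambiguity already built into its definition. By (L1), the integral isometry induces an automorphism of $\cE^{n+1}$ over $\cM$ and an equivalence of Looijenga lines. This equivalence preserves the zero section, since a linear automorphism of $\cE\otimes\Lambda$ fixes the identity. It therefore commutes with zero-section restriction and its trivialization; this is part of the compatibilities in (L3)/(L4). Consequently the two restriction maps
\[
 L^{b\oplus\langle-1\rangle}\simeq\TMF[3-2n]\longrightarrow\TMF
\]
differ only by precomposition with an automorphism of $\TMF[3-2n]$, that is, by a unit $\pm1$ of $\pi_0\TMF$, using \eqref{eq:low-stems}. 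Equating the two evaluations gives $\pm\nu\dclass_b=0$, and hence $\nu\dclass_b=0$.

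The main obstacle is this isometry-invariance step. The source identifications of Corollary~\ref{cor:unimodular-shift} are noncanonical, so I must check that transporting along the isometry changes the closed class only by a unit, and not by some other self-map. This is settled by the fact that $\Hom_{\TMF}(\TMF[k],\TMF[k])\cong\pi_0\TMF$, so any self-equivalence is multiplication by a unit, and $(\pi_0\TMF)^\times=\{\pm1\}$.

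A secondary check is that (L3) applies to the presentation $b\oplus\langle-1\rangle$ as an orthogonal direct sum in the original coordinates. This lets the stabilizing coordinate be retained rather than cancelled, in line with the paper's convention.
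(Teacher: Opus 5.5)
Your proposal is correct and is essentially the paper's proof: both evaluate $\dclass_{b\oplus\langle-1\rangle}$ once through the isometry with $I_{n,1}$, where the factor $\dclass_{\langle1\rangle}=0$ makes the class vanish, and once through direct-sum compatibility in the original splitting, which gives $\pm\nu\dclass_b$. Your additional argument that transport along the isometry changes the closed class only by a unit of $\pi_0\TMF$ spells out the ``isometry invariance'' that the paper invokes without further justification.
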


\begin{proof}
Let $d=\operatorname{rk}b$. The enlarged lattice
$b\oplus\langle-1\rangle$ is odd, indefinite, and unimodular of
signature $(d,1)$, and therefore admits an actual isometry with
\[
 I_{d,1}=\langle1\rangle^{\oplus d}\oplus\langle-1\rangle.
\]
The positive-rank hypothesis ensures that the diagonal form contains a
positive line, whose closed class is zero. Isometry invariance and
direct-sum compatibility therefore give
\[
 \dclass_{b\oplus\langle-1\rangle}
 =\pm\dclass_{\langle1\rangle}^{\,d}\dclass_{\langle-1\rangle}=0.
\]
Direct-sum compatibility in the original decomposition gives a second
expression for the same class:
\[
 \dclass_{b\oplus\langle-1\rangle}
 =\pm\dclass_b\dclass_{\langle-1\rangle}
 =\pm\nu\dclass_b.
\]
Thus $\nu\dclass_b=0$.
\end{proof}

\begin{corollary}[Three hyperbolic planes]\label{cor:three-H}
If $b$ is positive definite and unimodular of positive rank, then
\[
 \dclass_{b\oplus H^{\oplus r}}=0
 \qquad(r\ge3).
\]
The positive-rank condition is sharp:
$\dclass_{3H}=\eta^3=12\nu\ne0$.
\end{corollary}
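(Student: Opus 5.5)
The vanishing follows from combining Theorem~\ref{thm:nu-annihilation} with Theorem~\ref{thm:stabilization} and the relation $\eta^3=12\nu$ from~\eqref{eq:eta-relations}. The case $r\ge4$ needs no positivity at all: Corollary~\ref{cor:pure-hyperbolic} already gives $\dclass_{b\oplus4H}=0$ for every unimodular $b$. For $r\ge4$ we may also write $b\oplus H^{\oplus r}=(b\oplus H^{\oplus 3})\oplus H^{\oplus(r-3)}$ and apply stabilization once the case $r=3$ is known. So the substantive case is $r=3$.

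For $r=3$, apply Theorem~\ref{thm:stabilization} with $r=3$ to get
\[
 \dclass_{b\oplus H^{\oplus3}}=\eta^3\dclass_b=12\nu\dclass_b=12\,(\nu\dclass_b).
\]
Here the first equality holds without sign ambiguity, because $r\ge1$. By Theorem~\ref{thm:nu-annihilation}, $\nu\dclass_b=0$ since $b$ is positive definite, unimodular, and of positive rank. Hence the class vanishes.

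For sharpness, take $b$ of rank zero. Then $\dclass_b=\dclass_0=1$ and Corollary~\ref{cor:pure-hyperbolic} gives $\dclass_{3H}=\eta^3=12\nu$. The only nontrivial input is $12\nu\ne0$ in $\pi_3\TMF$. This holds because $\pi_3\TMF\cong\Z/24$ is generated by $\nu$, a standard computation tabulated in \cite[Appendix~C]{TY}, or equivalently because $\eta^3\ne0$ in $\pi_3\TMF$.

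Citing $\pi_3\TMF\cong\Z/24$ is the only step that goes beyond the paper's stated inputs. Everything else is a formal combination of earlier results.
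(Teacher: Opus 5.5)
Your proposal is correct and follows the paper's proof: the case $r=3$ comes from stabilization, $\eta^3=12\nu$, and $\nu$-annihilation, and the case $r\ge4$ from $\eta^4=0$. Your justification of $12\nu\ne0$ via $\pi_3\TMF\cong\Z/24$ generated by $\nu$ supplies a detail that the paper's proof leaves implicit.
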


\begin{proof}
For $r=3$, the assertion follows from $\eta^3=12\nu$ and
Theorem~\ref{thm:nu-annihilation}; for $r\ge4$, it follows from
$\eta^4=0$.
\end{proof}

For the complex orientation of K3,
\[
 Q_{\mathrm{K3}}\cong2(-E_8)\oplus3H,
 \qquad
 -Q_{\mathrm{K3}}\cong2E_8\oplus3H.
\]

\begin{corollary}[K3 vanishing]\label{cor:K3}
\[
 \boxed{Z(\mathrm{K3})=0.}
\]
\end{corollary}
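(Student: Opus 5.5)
The plan is to unwind the definition $Z(\mathrm{K3})=\dclass_{-Q_{\mathrm{K3}}}$ and reduce to Corollary~\ref{cor:three-H}. With the complex orientation, $-Q_{\mathrm{K3}}\cong 2E_8\oplus 3H$. Isometry invariance of the closed class (the Looijenga lines of isometric forms are identified by an actual lattice isometry, as in Hypotheses~\ref{contract:foundational}\textup{(L1)}) gives $Z(\mathrm{K3})=\dclass_{2E_8\oplus 3H}$.

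Take $b=2E_8=E_8\oplus E_8$. This form is positive definite, even, unimodular and of rank $16>0$, so the hypotheses of Theorem~\ref{thm:nu-annihilation} and Corollary~\ref{cor:three-H} hold.

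Corollary~\ref{cor:three-H} with $r=3$ then gives $\dclass_{2E_8\oplus 3H}=0$. Unwinding that corollary shows there is no sign issue. Theorem~\ref{thm:stabilization} gives
\[
\dclass_{b\oplus 3H}=\eta^3\dclass_b=12\nu\,\dclass_b
\]
with no residual sign. Theorem~\ref{thm:nu-annihilation} gives $\nu\dclass_b=0$, so the product vanishes.

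There is no real obstacle here. The only point to check is the orientation convention: $Z$ evaluates $\dclass$ on $-Q_X$, so the definite summands appear as $+E_8$ and the positive-definite annihilation applies directly, without needing the negative-definite vanishing of part~\textup{(iv)}. For the reversed orientation, $-Q_{\overline{\mathrm{K3}}}=2(-E_8)\oplus 3H$, and there one would instead need $\dclass_{-2E_8}=0$ from Section~\ref{sec:negative}. That case is not part of the present statement.
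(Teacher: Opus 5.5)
Your proposal is correct and matches the paper's proof, which likewise uses $-Q_{\mathrm{K3}}\cong 2E_8\oplus 3H$ and applies Corollary~\ref{cor:three-H} to the positive-definite lattice $2E_8$. Your unwinding through $\eta^3=12\nu$ and Theorem~\ref{thm:nu-annihilation} is exactly what that corollary packages.
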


\begin{proof}
Apply Corollary~\ref{cor:three-H} to the positive-definite
lattice $2E_8$.
\end{proof}

\begin{remark}[Vanishing of the target group]\label{rem:K3-target}
The class $\dclass_{2E_8\oplus3H}$ lies in
\[
 \pi_{3\cdot3-2\cdot19}\TMF=\pi_{-29}\TMF.
\]
The published Tate-image and Tate-kernel tables imply
$\pi_{-29}\TMF=0$ \cite[Appendix~C]{TY}, which establishes
Corollary~\ref{cor:K3} independently of the hyperbolic calculation
and multiplicativity.
\end{remark}

\section{Negative even unimodular lattices}\label{sec:negative}

The vanishing statement needed for the reverse orientation concerns the
closed class of a negative-definite even unimodular lattice. Its proof
uses total pushforward and relative descent; the pushforward calculation,
which is the negative-polarization counterpart of GKMP Lemma~9.6
\cite{GKMP}, relies on the same stabilization and base-change tools.

The comparison of relative descent filtrations and the known calculation
of the kernel of the Tate-curve map serve distinct roles. Total
pushforward places the classical cohomology of a rank-$d$ negative
polarization in degree $d$, whereas zero-section restriction has target
in degree zero. This separation forces the modular-form edge of the
closed class to vanish. Evenness and unimodularity imply that the rank
is divisible by eight, and the Tate kernel is zero in the resulting
degrees. Both steps enter the argument, because vanishing on homotopy
sheaves alone need not make a spectral map nullhomotopic.

For a form $c$ of rank $r$, define
\[
 \mathcal P^{\Z}(c)=\mathbf R(p_r)_*\cL_c^{\Z},
 \qquad
 \mathcal P^{\mathrm{top}}(c)=(p_r)_*\cL_c.
\]
For $\mathcal P^{\Z}$, brackets denote shifts in the classical
derived category, so that $F[-r]$ places $F$ in cohomological degree $r$.
For $\mathcal P^{\mathrm{top}}$, brackets retain the spectral
suspension convention of Section~\ref{sec:contract}.

\begin{lemma}[Rank-one total pushforwards]\label{lem:rankone-push}
One has
\[
 \mathcal P^{\Z}(\langle1\rangle)\simeq\omega,
 \qquad
 \mathcal P^{\Z}(\langle-1\rangle)\simeq\omega^{-2}[-1],
\]
and
\[
 \mathcal P^{\mathrm{top}}(\langle1\rangle)\simeq\cO_{\cM}^{\mathrm{top}}[-2],
 \qquad
 \mathcal P^{\mathrm{top}}(\langle-1\rangle)\simeq\cO_{\cM}^{\mathrm{top}}[3].
\]
\end{lemma}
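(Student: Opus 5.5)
The plan is to compute the classical pushforwards from the integral normalizations of Proposition~\ref{prop:public-rankone}(iv) and the projection formula. The spectral pushforwards can be read off from Proposition~\ref{prop:public-rankone}(ii) and (iv). Throughout, $p=p_1:\cE\to\cM$ is the universal elliptic curve.

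For $\langle1\rangle$, Proposition~\ref{prop:public-rankone}(iv) gives $\cL_{\langle1\rangle}^{\Z}\cong\cO_{\cE}(e)\otimes p^*\omega$, so the projection formula yields
\[
 \mathbf Rp_*\cL_{\langle1\rangle}^{\Z}\simeq\mathbf Rp_*\cO_{\cE}(e)\otimes\omega .
\]
It remains to show $\mathbf Rp_*\cO_{\cE}(e)\simeq\cO_{\cM}$, concentrated in degree zero. On each geometric fiber, a degree-one line bundle on a genus-one curve has $h^0=1$ and $h^1=0$. Cohomology and base change then show that $R^1p_*\cO_{\cE}(e)=0$ and that $p_*\cO_{\cE}(e)$ is a line bundle. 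The canonical map $\cO_{\cM}=p_*\cO_{\cE}\to p_*\cO_{\cE}(e)$ is fiberwise nonzero, hence an isomorphism. This gives $\mathcal P^{\Z}(\langle1\rangle)\simeq\omega$.

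For $\langle-1\rangle$, the integral line is $\cO_{\cE}(-e)\otimes p^*\omega^{-1}$, so it suffices to show $\mathbf Rp_*\cO_{\cE}(-e)\simeq\omega^{-1}[-1]$. I will push forward the classical divisor sequence
\[
 0\to\cO_{\cE}(-e)\to\cO_{\cE}\to e_*\cO_{\cM}\to0 .
\]
The map $p_*\cO_{\cE}=\cO_{\cM}\to p_*e_*\cO_{\cM}=\cO_{\cM}$ is evaluation at the identity section, which is an isomorphism. The long exact sequence therefore gives $p_*\cO_{\cE}(-e)=0$ and $R^1p_*\cO_{\cE}(-e)\cong R^1p_*\cO_{\cE}$. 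Relative Serre duality, with dualizing sheaf $p^*\omega$, identifies $R^1p_*\cO_{\cE}\cong(p_*\Omega^1_{\cE/\cM})^\vee=\omega^{-1}$. Tensoring with $\omega^{-1}$ and placing the result in cohomological degree one gives $\mathcal P^{\Z}(\langle-1\rangle)\simeq\omega^{-2}[-1]$.

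The spectral statements are exactly the arbitrary-base pushforward entries $(p_1)_*\cL_{\langle\pm1\rangle}$ of Proposition~\ref{prop:public-rankone}(iv), which Hypotheses~\ref{contract:foundational}\textup{(L5)} supplies. If a derivation is wanted, the normalization $\cL_{\langle1\rangle}\simeq\cO^{\mathrm{top}}_{\cE}(e)[-2]$ from \textup{(L2)}, together with $p_*\cO^{\mathrm{top}}_{\cE}(e)\simeq\cO^{\mathrm{top}}_{\cM}$ from (ii), gives $\cO^{\mathrm{top}}_{\cM}[-2]$. For $\langle-1\rangle$, one writes $\cL_{\langle-1\rangle}$ as $\cO^{\mathrm{top}}_{\cE}(-e)$ twisted by a shift. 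Then $p_*\cO^{\mathrm{top}}_{\cE}(-e)\simeq\cO^{\mathrm{top}}_{\cM}[1]$ from (ii), and the projection formula yields $\cO^{\mathrm{top}}_{\cM}[3]$.

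The step I expect to be the main obstacle is the shift bookkeeping in the $\langle-1\rangle$ case. The classical twist by $\omega^{-1}$ must correspond to the spectral shift $[2]$, since $\pi_{2t}$ carries $\omega^t$ by \textup{(L2)}. The resulting $\cO[1]\otimes[2]=\cO[3]$ should then be checked against the classical answer $\omega^{-2}[-1]$ through the descent spectral sequence, as a consistency test. Concretely, $\pi_{2t}$ of $\cO^{\mathrm{top}}_{\cM}[3]$ should match $R^1$ of $\omega^{-2}\otimes\omega^{t}$ in the appropriate total degree.
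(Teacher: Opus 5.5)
Your proposal is correct and follows essentially the same proof as the paper. For $\langle1\rangle$ you use the projection formula together with $\mathbf R p_*\cO(e)\simeq\cO_{\cM}$, for $\langle-1\rangle$ you push forward the divisor sequence and use relative duality $R^1p_*\cO\cong\omega^{-1}$, and you take the spectral formulas as the arbitrary-base statements of Proposition~\ref{prop:public-rankone}(iv) rather than reading them off from global sections. Your optional derivation via $\cL_{\langle-1\rangle}\simeq\cO^{\mathrm{top}}_{\cE}(-e)[2]$ is consistent with (L1)--(L2); in the descent check, however, the sheaf $\omega^{t-2}$ should be matched with $\pi_{2t-1}$ of $\cO_{\cM}^{\mathrm{top}}[3]$, since $\pi_{2t}$ of that sheaf is zero.
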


\begin{proof}
Relative degree one gives $\mathbf R(p_1)_*\cO(e)\simeq\cO_{\cM}$.
Combining this equivalence with the classical line formula and the
projection formula proves the positive assertion. For the negative line,
apply pushforward to
\[
 0\longrightarrow\cO(-e)\longrightarrow\cO\longrightarrow e_*\cO_{\cM}\longrightarrow0.
\]
The map on constants is the identity, and relative duality gives
$R^1(p_1)_*\cO\cong\omega^{-1}$. These identifications yield
\[
 \mathbf R(p_1)_*\cO(-e)\simeq\omega^{-1}[-1].
\]
The additional factor $\omega^{-1}$ in
$\cL_{\langle-1\rangle}^{\Z}$ then gives $\omega^{-2}[-1]$.
The spectral formulas, which are the arbitrary-base statements of
Proposition~\ref{prop:public-rankone}(iv), are not inferred merely
from the global-section modules.
\end{proof}

\begin{lemma}[K\"unneth for total structural pushforward]\label{lem:push-kunneth}
For integral forms $c,c'$,
\[
 \mathcal P^{\Z}(c\oplus c')
 \simeq
 \mathcal P^{\Z}(c)\otimes^{\mathbf R}_{\cO_{\cM}}\mathcal P^{\Z}(c'),
\]
\[
 \mathcal P^{\mathrm{top}}(c\oplus c')
 \simeq
 \mathcal P^{\mathrm{top}}(c)\otimes_{\cO_{\cM}^{\mathrm{top}}}\mathcal P^{\mathrm{top}}(c').
\]
\end{lemma}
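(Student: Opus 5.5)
We prove both equivalences in the same way: by factoring the structural map of the product through the two coordinate projections, then applying proper base change and the projection formula. These are the formal properties of perfect quasicoherent modules recorded after Proposition~\ref{prop:public-rankone}.

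Let $c$ live on a lattice of rank $r$ and $c'$ on one of rank $r'$. Then $\cE^{r+r'}\cong\cE^r\times_{\cM}\cE^{r'}$. Write $q:\cE^{r+r'}\to\cE^r$ and $q':\cE^{r+r'}\to\cE^{r'}$ for the projections, so that $p_{r+r'}=p_r\circ q$. By (L1), applied to the two coordinate inclusions of the orthogonal direct sum, we have the external-product equivalence
\[
 \cL_{c\oplus c'}\simeq q^*\cL_c\otimes q'^*\cL_{c'}.
\]
The classical line $\cL^{\Z}_{c\oplus c'}$ admits the analogous identification. This follows by taking $\pi_0$ in (L2), or directly from the algebraic Looijenga construction.

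The first step computes the pushforward along $q$. The projection formula gives
\[
 q_*\bigl(q^*\cL_c\otimes q'^*\cL_{c'}\bigr)\simeq\cL_c\otimes q_*q'^*\cL_{c'}.
\]
Because the square formed by $q,q',p_r,p_{r'}$ is cartesian and $p_{r'}$ is proper, proper base change identifies
\[
 q_*q'^*\cL_{c'}\simeq p_r^*(p_{r'})_*\cL_{c'}=p_r^*\mathcal P(c').
\]
The second step pushes forward along $p_r$ and applies the projection formula once more:
\[
 (p_r)_*\bigl(\cL_c\otimes p_r^*\mathcal P(c')\bigr)\simeq\mathcal P(c)\otimes\mathcal P(c').
\]
In the classical case every tensor product is derived and every pushforward is $\mathbf R$. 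In the spectral case the tensor products are taken over $\cO^{\mathrm{top}}$.

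The main obstacle is making sure that the projection formula and base change apply. For this we need the pushforwards to be perfect, or at least that $p_r$ is proper, flat and of finite presentation, so that quasicoherent pushforward commutes with base change. Properness and flatness of $p_r$ hold because $p_r$ is an abelian scheme, and (L1) gives invertibility of the lines. Classically, flatness of $\cE^{r'}\to\cM$ makes the base-change square Tor-independent. Spectrally, base change holds for the relative spectral elliptic curve, as in the proof of \cite[Theorem~10.1]{GM23}. Once these two formal properties are in place, the rest is bookkeeping. Finally, the equivalences are compatible with the rank-one identifications of Lemma~\ref{lem:rankone-push} because everything above is natural in the lines.
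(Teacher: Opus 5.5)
Your proof is correct and follows the paper's own argument: write $\cL_{c\oplus c'}$ as an external tensor product, then apply the projection formula and base change along the two structural projections. Your version is more careful than the paper's one-line proof, which leaves the classical case and the hypotheses for base change implicit. One small correction: the external-product equivalence comes from (L1) applied to the coordinate \emph{projections} $\Lambda\oplus\Lambda'\to\Lambda,\Lambda'$ (these induce $q$ and $q'$), combined with additivity of forms, or directly from the orthogonal-sum clause of (L1). The coordinate inclusions only identify the restriction of $\cL_{c\oplus c'}$ to $\cE^r\times\{e\}$ with $\cL_c$.
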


\begin{proof}
The line associated with a direct sum is the external tensor product.
Applying flat base change and the projection formula successively to the
two structural projections yields the displayed equivalences. This
K\"unneth statement concerns external products; it does not assert that
arbitrary pushforward is symmetric monoidal.
\end{proof}

\begin{theorem}[Total pushforward for a negative polarization]\label{thm:negative-push}
Let $b$ be positive definite and unimodular of rank $d>0$,
without assuming evenness. Then
\begin{equation}\label{eq:negative-push-top}
 (p_d)_*\cL_{-b}\simeq\cO_{\cM}^{\mathrm{top}}[3d],
\end{equation}
\begin{equation}\label{eq:negative-push-classical}
 \mathbf R(p_d)_*\cL_{-b}^{\Z}\simeq\omega^{-2d}[-d].
\end{equation}
Consequently
\[
 R^j(p_d)_*\cL_{-b}^{\Z}\cong
 \begin{cases}
  \omega^{-2d},&j=d,\\
  0,&j\ne d.
 \end{cases}
\]
\end{theorem}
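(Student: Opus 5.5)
Our plan is to reduce to the diagonal case by the same stabilization-and-cancellation device used for the source modules, but now at the level of sheaves on $\cM$ rather than global sections. We adjoin one negative line: the form $b\oplus\langle-1\rangle$ is odd, indefinite and unimodular of signature $(d,1)$. By \cite[Chapter~II]{MH} there is an actual isometry
\[
 b\oplus\langle-1\rangle\cong\langle1\rangle^{\oplus d}\oplus\langle-1\rangle .
\]
Negating both sides gives an actual isometry
\[
 (-b)\oplus\langle1\rangle\cong\langle-1\rangle^{\oplus d}\oplus\langle1\rangle .
\]
By (L1) this integral matrix induces an automorphism $\phi$ of $\cE^{d+1}$ over $\cM$ with $\phi^*\cL_{\langle-1\rangle^{d}\oplus\langle1\rangle}\simeq\cL_{(-b)\oplus\langle1\rangle}$. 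Because $\phi$ commutes with $p_{d+1}$, we have $(p_{d+1})_*\phi^*\simeq(p_{d+1})_*$, so the two total pushforwards agree as modules over $\cO_{\cM}^{\mathrm{top}}$. The same argument applies to the classical lines $\cL^{\Z}$.

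Next we apply Lemma~\ref{lem:push-kunneth} to both presentations and Lemma~\ref{lem:rankone-push} to the rank-one factors. On the diagonal side the spectral pushforward is
\[
 \cO_{\cM}^{\mathrm{top}}[3d]\otimes\cO_{\cM}^{\mathrm{top}}[-2]=\cO_{\cM}^{\mathrm{top}}[3d-2],
\]
and the classical pushforward is
\[
 (\omega^{-2}[-1])^{\otimes d}\otimes\omega=\omega^{-2d+1}[-d].
\]
On the original side we obtain $(p_d)_*\cL_{-b}\otimes\cO_{\cM}^{\mathrm{top}}[-2]$ and $\mathbf R(p_d)_*\cL^{\Z}_{-b}\otimes\omega$, respectively. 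The stabilizing factors $\cO_{\cM}^{\mathrm{top}}[-2]$ and $\omega$ are invertible on $\cM$, so tensoring with their inverses yields~\eqref{eq:negative-push-top} and~\eqref{eq:negative-push-classical}. This is the cancellation step anticipated in the introduction. It is also why we push forward totally to $\cM$ and do not work with $\Gamma$: on $\cM$ the rank-one pushforward of $\langle1\rangle$ is a genuine line, not merely a suspension of a global-section module.

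The description of $R^j$ is then immediate, since a line shifted by $[-d]$ has cohomology only in degree $d$.

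The main obstacle is making sure that the isometry-induced automorphism does not disturb the pushforward and the Künneth identifications compatibly. Concretely, $\phi$ is an automorphism of $\cE^{d+1}$ that does not preserve the product decomposition, so we cannot use Künneth on one side and then transport it. We plan to treat the two sides separately: first pass through $\phi$ to an equivalence of pushforwards of whole lines, and only afterwards decompose each side by its own external-product structure from (L1) and (L4). We would also double-check that the classical computation is consistent with (L2), using $\pi_{2t}$ of the spectral line. This gives a sanity check, via the relative descent spectral sequence, that $\cO^{\mathrm{top}}_{\cM}[3d]$ has homotopy sheaves $\omega^{-2d}\otimes\omega^{t}$ placed in the degrees matching $[-d]$ and the shift $3d=2(\,2d\,)-d$.
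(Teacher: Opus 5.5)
Your proposal is correct and is essentially the paper's proof: stabilize $-b$ by $\langle1\rangle$, use the actual isometry $(-b)\oplus\langle1\rangle\cong\langle1\rangle\oplus\langle-1\rangle^{\oplus d}$ lying over the identity of $\cM$, apply Lemma~\ref{lem:push-kunneth} to each presentation separately, and cancel the invertible rank-one pushforward of $\langle1\rangle$ on $\cM$ using Lemma~\ref{lem:rankone-push}; your intermediate values $\cO_{\cM}^{\mathrm{top}}[3d-2]$ and $\omega^{-2d+1}[-d]$ are correct. One small correction to your stated motivation: $L^{\langle1\rangle}\simeq\TMF[-2]$ is also invertible, so cancellation would work on global sections as well, and the real reason to work on $\cM$ is that the theorem is a sheaf-level statement, which Proposition~\ref{prop:ghost} needs for relative descent.
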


\begin{proof}
The lattice $(-b)\oplus\langle1\rangle$ is odd, indefinite,
and unimodular of signature $(1,d)$, so there is an actual isometry
\[
 (-b)\oplus\langle1\rangle
 \cong
 \langle1\rangle\oplus\langle-1\rangle^{\oplus d}.
\]
The induced automorphism of $\cE^{d+1}$ leaves total pushforward
unchanged because it lies over the identity of $\cM$.
Lemma~\ref{lem:push-kunneth} therefore gives
\[
 \mathcal P(-b)\otimes\mathcal P(\langle1\rangle)
 \simeq
 \mathcal P(\langle1\rangle)\otimes\mathcal P(\langle-1\rangle)^{\otimes d}
\]
in both categories. On the moduli base, the common positive rank-one
pushforward is invertible and can therefore be cancelled. The
equivalences~\eqref{eq:negative-push-top}
and~\eqref{eq:negative-push-classical} now follow from
Lemma~\ref{lem:rankone-push}.

No lattice summand, coordinate projection, or fixed section is cancelled.
\end{proof}

Set
\[
 \mathcal F_b=(p_d)_*\cL_{-b}\simeq\cO_{\cM}^{\mathrm{top}}[3d].
\]
Zero-section restriction gives
\begin{equation}\label{eq:rho-b}
 \rho_b:\mathcal F_b\longrightarrow\cO_{\cM}^{\mathrm{top}}.
\end{equation}

\begin{proposition}[Relative descent and vanishing on homotopy sheaves]\label{prop:ghost}
For every integer $t$,
\[
 R^j(p_d)_*\pi_{2t}\cL_{-b}\cong
 \begin{cases}
  \omega^{t-2d},&j=d,\\
  0,&j\ne d,
 \end{cases}
 \qquad
 R^j(p_d)_*\pi_{2t+1}\cL_{-b}=0.
\]
Thus the source relative descent spectral sequence is concentrated
in filtration $d$. If $d>0$, the map~\eqref{eq:rho-b} induces zero
on every homotopy sheaf.
\end{proposition}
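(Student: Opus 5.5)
We compute the relative pushforwards of the homotopy sheaves directly from the classical calculation in Theorem~\ref{thm:negative-push}, and then read off the two consequences. By Hypotheses~\ref{contract:foundational}\textup{(L2)}, $\pi_{2t+1}\cL_{-b}=0$, so its higher pushforwards vanish trivially. For even degrees,
\[
 \pi_{2t}\cL_{-b}\cong\cL_{-b}^{\Z}\otimes p_d^*\omega^t .
\]
Since $\omega^t$ is a line bundle on $\cM$, the projection formula gives
\[
 R^j(p_d)_*\pi_{2t}\cL_{-b}\cong R^j(p_d)_*\cL_{-b}^{\Z}\otimes\omega^t .
\]
Theorem~\ref{thm:negative-push}, in the form $\mathbf R(p_d)_*\cL_{-b}^{\Z}\simeq\omega^{-2d}[-d]$, then yields $\omega^{t-2d}$ in degree $j=d$ and zero otherwise. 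This proves the displayed formula. Only the classical statement~\eqref{eq:negative-push-classical} and the homotopy-sheaf clause of (L2) enter this step.

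Next comes the relative descent (Postnikov) spectral sequence for $(p_d)_*\cL_{-b}$:
\[
 E_2^{j,s}=R^j(p_d)_*\pi_{s}\cL_{-b}\;\Longrightarrow\;\pi_{s-j}(p_d)_*\cL_{-b}.
\]
By the computation above, the $E_2$ page is concentrated on the single line $j=d$. All differentials therefore vanish, and the spectral sequence collapses with no extension problems. This gives
\[
 \pi_{2t-d}\mathcal F_b\cong\omega^{t-2d}\quad\text{in filtration } d .
\]
Comparing with $\mathcal F_b\simeq\cO^{\mathrm{top}}_{\cM}[3d]$, which has $\pi_{2u+3d}\cong\omega^u$, gives a consistent reindexing and serves as a sanity check.

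The target $\cO^{\mathrm{top}}_{\cM}$ is the pushforward along the identity, and its descent spectral sequence sits in filtration $0$. The zero-section restriction $\rho_b$ is compatible with these filtrations, since the relative descent spectral sequence is natural. Concretely, $\rho_b$ is $(p_d)_*$ applied to $\cL_{-b}\to e_*e^*\cL_{-b}$, and \textup{(L4)} lets us use this naturality in the homotopy category. A map of filtered objects can only raise filtration, so the induced map on homotopy sheaves sends everything in filtration $d$ into filtration at least $d$ of the target. For $d>0$ that part of the target is zero, because the target is concentrated in filtration $0$. Hence $\rho_b$ induces zero on every homotopy sheaf.

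The main obstacle is making this filtration argument precise. There are two points to address. The first is that naturality of the relative descent spectral sequence really applies to the map $\mathcal F_b\to\cO^{\mathrm{top}}_{\cM}$. For this I would realize $\rho_b$ as $(p_d)_*$ of the unit map $\cL_{-b}\to e_*e^*\cL_{-b}$ combined with the trivialization from \textup{(L3)}, and apply naturality to that sheaf-level map. The second is that this map really is filtration-preserving on abutments. I would argue this at the level of the Postnikov towers of the source and target sheaves: the target tower after pushforward along $p_d\circ e=\mathrm{id}$ has no higher cohomology, so every class in positive filtration maps to zero. This conclusion is only about homotopy sheaves. I would not claim the map is nullhomotopic here, since that needs the separate Tate-kernel input mentioned in the section introduction.
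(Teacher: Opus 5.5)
Your proposal is correct and follows essentially the same route as the paper. The displayed sheaf computation comes from (L2), the projection formula, and the classical pushforward $\mathbf R(p_d)_*\cL_{-b}^{\Z}\simeq\omega^{-2d}[-d]$; the vanishing on homotopy sheaves for $d>0$ comes from naturality of the relative descent spectral sequence for $(p_d)_*$ applied to $\cL_{-b}\to e_*e^*\cL_{-b}$, with the target in filtration zero because $p_d\circ e=\mathrm{id}_{\cM}$. Your collapse remark and the consistency check against $\cO_{\cM}^{\mathrm{top}}[3d]$ are harmless extras.
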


\begin{proof}
Hypotheses~\ref{contract:foundational}\textup{(L2)}, together with the
projection formula and~\eqref{eq:negative-push-classical}, imply the
first assertion. The map $\rho_b$ is obtained by pushing forward
\[
 \cL_{-b}\longrightarrow e_*e^*\cL_{-b}\simeq e_*\cO_{\cM}^{\mathrm{top}}.
\]
The relative descent spectral sequence is
\[
 R^j(p_d)_*\pi_t\cL_{-b}
   \Longrightarrow\pi_{t-j}\mathcal P^{\mathrm{top}}(-b).
\]
Naturality makes this spectral sequence compatible with the restriction
map. The source is concentrated in filtration $d$, while the target
is concentrated in filtration zero because
$p_d\circ e=\mathrm{id}_{\cM}$. For $d>0$, the entire source
homotopy sheaf lies in $F^d$, whereas the target has $F^d=0$.
Preservation of the filtration therefore forces the induced map on
homotopy sheaves to vanish.
\end{proof}

\begin{remark}
Vanishing on homotopy sheaves does not imply that $\rho_b$ is
nullhomotopic. Indeed, for $d=1$ and $b=\langle1\rangle$, the
closed class is $\dclass_{\langle-1\rangle}=\pm\nu\ne0$.
\end{remark}

Let
\[
 \Phi:\pi_n\TMF\longrightarrow\pi_n\KO((q))
\]
be the Tate-curve map and let $A_n=\ker\Phi$.
For even $n$, the global descent edge used below is
\[
 \pi_n\TMF\longrightarrow
 H^0(\cM;\pi_n\cO_{\cM}^{\mathrm{top}})
   =H^0(\cM;\omega^{n/2}).
\]
For odd $n$ the corresponding homotopy sheaf is zero.

\begin{proposition}[Tate-kernel placement]\label{prop:tate-placement}
The modular-form edge of $\dclass_{-b}$ is zero. If $8\mid d$, then
\[
 \dclass_{-b}\in A_{3d}.
\]
\end{proposition}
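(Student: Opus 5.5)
Identify $\dclass_{-b}$ with the global-section map $\Gamma(\rho_b)\colon\Gamma(\cM,\mathcal F_b)\simeq\TMF[3d]\to\TMF$. Then read off its modular-form edge by naturality of the global descent spectral sequence. The statement has two assertions: vanishing of the edge, and, when $8\mid d$, membership in $A_{3d}=\ker\Phi$. I would prove them in that order.

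For the edge, write $\dclass_{-b}\in\pi_{3d}\TMF$ as the image of the unit generator $u\in\pi_{3d}\Gamma(\mathcal F_b)$ under $\Gamma(\rho_b)$. The edge homomorphism $\pi_{3d}\TMF\to H^0(\cM;\pi_{3d}\cO^{\mathrm{top}}_{\cM})$ is natural for maps of sheaves. So the edge of $\dclass_{-b}$ equals the image, under $H^0(\pi_{3d}\rho_b)$, of the edge of $u$. Proposition~\ref{prop:ghost} says that $\pi_{3d}\rho_b=0$ as a map of homotopy sheaves, since $d>0$. Hence the edge of $\dclass_{-b}$ vanishes. When $3d$ is odd, the target sheaf is zero and the claim is trivial. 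This step is formal once naturality of the global descent edge is accepted as one of the standard formal properties listed in Section~\ref{sec:contract}.

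For the Tate-kernel placement, assume $8\mid d$, so that $3d\equiv0\pmod{24}$ and $3d$ is even. The task is to pass from vanishing of the edge to vanishing of $\Phi(\dclass_{-b})$. My plan is to use that $\pi_{3d}\KO((q))\cong\Z((q))$ when $3d\equiv0\pmod 8$, and that the composite
\[
 \pi_{3d}\TMF\to H^0(\cM;\omega^{3d/2})\to\Z((q))
\]
is the $q$-expansion of the modular-form edge. In other words, $\Phi$ factors through the edge in degrees divisible by $8$. This rests on compatibility of the Tate-curve map with descent spectral sequences: the Tate curve gives a map of spectral stacks $\operatorname{Spf}\KO((q))\to\cM$, and pulling back $\omega$ gives $\pi_{2k}\KO((q))$ for $4\mid k$. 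Since $\KO((q))$ has torsion-free $\pi_{8m}$ with no higher-filtration contributions, $\Phi(\dclass_{-b})$ equals the $q$-expansion of the edge, which is zero by the previous step. Therefore $\dclass_{-b}\in A_{3d}$.

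The main obstacle is this factorization of $\Phi$ through the edge. I need to know that no positive-filtration piece of $\pi_{3d}\TMF$ maps nontrivially to $\pi_{3d}\KO((q))$. I would argue it from naturality of the descent filtrations: the Tate map preserves filtration, and the target spectral sequence for $\KO((q))$ in degree $8m$ is concentrated in filtration zero, with the Adams-type filtration contributions of $\KO$ lying only in degrees $\equiv1,2\pmod 8$. Any class of positive filtration therefore maps to zero. If that filtration comparison proves delicate, the fallback is to cite the tabulated description of $\Phi$ in \cite[Appendix~C]{TY}, where the torsion-free part of $\pi_{24k}\TMF$ is detected by $q$-expansions and the torsion lies in the kernel.
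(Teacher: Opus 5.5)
Your proposal is correct and follows the paper's proof. The vanishing of the edge comes from naturality of the global descent edge together with Proposition~\ref{prop:ghost}, and in degree $3d\equiv0\pmod 8$ the Tate image is read off from the $q$-expansion of that edge. The difference is only in how you justify that factorization: you use filtration preservation into the $C_2$-homotopy fixed point spectral sequence of $\KO((q))$, whereas the paper uses the complexified Tate map (citing \cite[Appendix~C]{TY}) and injectivity of $\pi_{8m}\KO((q))\to\pi_{8m}\KU((q))$.
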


\begin{proof}
Under~\eqref{eq:negative-push-top}, let $u_b$ be the generator
corresponding to $1\in\pi_0\TMF$. The closed class is
$\Gamma(\rho_b)_*(u_b)$ up to sign. The modular-form edge of this
class vanishes by naturality of the global descent edge and
Proposition~\ref{prop:ghost}.

If $8\mid d$, then $3d\equiv0\pmod8$. In degrees divisible by
four, the complexified Tate map factors through the modular-form
edge and $q$-expansion \cite[Appendix~C]{TY}.
Complexification $\pi_{3d}\KO((q))\to\pi_{3d}\KU((q))$ is
injective in degree $0\pmod8$, so the real Tate image is zero.
\end{proof}

\begin{lemma}[Tate-kernel vanishing in degree $24k$]\label{lem:A24}
For every integer $k$,
\[
 A_{24k}=0.
\]
\end{lemma}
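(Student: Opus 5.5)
To prove that $A_{24k}=0$ for every integer $k$, I would not look for a structural argument. I would read the statement off the published computation of $\pi_*\TMF$ together with its Tate-curve image, as tabulated in \cite[Appendix~C]{TY}.

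The first step is to use $576$-periodicity. Multiplication by $\Delta^{24}$ is an isomorphism on $\pi_*\TMF$, and under the Tate map it goes to a unit of $\pi_{576}\KO((q))$. Hence $\Phi$ commutes with this periodicity up to a unit, and $A_n\cong A_{n+576}$. This reduces the claim to the $24$ residues $24k$ with $0\le k<24$.

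For each of these degrees I would split $\pi_{24k}\TMF$ into its torsion-free part and its torsion. On the torsion-free part, the modular-form edge is injective. In degree $24k\equiv0\pmod 8$, the Tate map followed by complexification is the $q$-expansion of that edge, and $q$-expansion is injective. So no torsion-free class lies in $A_{24k}$. This is the same mechanism as in the proof of Proposition~\ref{prop:tate-placement}, now used in the converse direction.

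It remains to check the torsion. In degrees $24k$, all torsion in $\pi_*\TMF$ is $2$- and $3$-primary. At $p=3$, the $3$-torsion of $\pi_*\TMF$ sits in degrees congruent to $3,10,13,20,27,30,37,\dots$ modulo $72$, and I would check from the Hopkins--Mahowald chart that none of these is divisible by $24$. At $p=2$, I would go through the $192$-periodic chart in degrees $0,24,\dots,168$. The torsion there, for example $\eta^2$-type and $\bar\kappa$-type classes in the relevant degrees, must be shown either to be absent or to be detected by $\pi_*\KO((q))$. One example is $\eta\cdot$ and $\eta^2\cdot$ powers of $\Delta^8$-type classes, which map to $\eta$-multiples of $q$-series in $\KO((q))$.

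The main obstacle is this $2$-primary bookkeeping. The cleanest route is to cite the Tate-kernel column of \cite[Appendix~C]{TY}, which lists $A_n$ degree by degree and shows $A_n=0$ for $n\equiv0\pmod{24}$. I would confirm that column against the $2$-primary chart in the eight residue classes modulo $192$ and combine it with the $3$-primary check and the periodicity reduction. If any degree $24k$ looks like it carries nondetected torsion, for instance a $\bar\kappa$-multiple in degree $120$ or $168$, I would verify its Tate image directly using the known image $\bar\kappa\mapsto 0$ together with the relations in that stem. If such a class really lies in the kernel, the lemma would need to be restricted to the degrees $3d$ that actually occur.
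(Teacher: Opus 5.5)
Your argument ends where the paper's proof does: you read off the Tate-kernel entries of \cite[Appendix~C]{TY} at the residues of $24k$. The paper reaches that point by citing that $A_n$ is torsion with no $p$-torsion for $p\ge5$, and then using the $2$- and $3$-primary periods $192$ and $72$. Your $576$-periodicity, together with injectivity of the edge and of $q$-expansion, is an equivalent way to see that $A_{24k}$ is torsion. Your $3$-primary degree list is a correct independent check of the residues $0,24,48 \bmod 72$.

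The $2$-primary part of your plan, however, rests on a misconception. Since $24k\equiv0\pmod 8$, the target $\pi_{24k}\KO((q))\cong\Z((q))$ is torsion-free, so every torsion class of $\pi_{24k}\TMF$ maps to zero under $\Phi$. Hence $A_{24k}$ is exactly the torsion subgroup of $\pi_{24k}\TMF$. This has three consequences for your plan:
\begin{enumerate}[label=\textup{(\alph*)}]
\item Your alternative ``detected by $\pi_*\KO((q))$'' is empty in these degrees. The $\eta$- and $\eta^2$-multiples you cite live in degrees $\equiv1,2\pmod 8$, not in degree $24k$.
\item Checking Tate images via $\bar\kappa\mapsto0$ can never remove a torsion class from the kernel. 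What must be shown is that $\pi_n\TMF$ has no $2$-torsion for $n\equiv0,24,\dots,168\pmod{192}$, and your proposal supplies this only through the same citation the paper uses.
\item Your fallback of restricting to the degrees $3d$ that actually occur is vacuous. The rank $d=8k$ occurs for every $k\ge1$ (e.g.\ $kE_8$), and these degrees already exhaust all residues of $24\Z$ modulo $576$. A single torsion class in such a degree would therefore falsify the lemma itself and leave Theorem~\ref{thm:negative-even} without proof; no restriction can avoid this.
\end{enumerate}
Once the dichotomy is corrected to ``torsion must be absent,'' your argument is the paper's table lookup and is exactly as complete as that citation.
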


\begin{proof}
Tachikawa--Yamashita prove that $A_n$ is torsion and has no
$p$-torsion for $p\ge5$; their complete $2$- and $3$-primary tables
have periods $192$ and $72$, respectively \cite[Appendix~C]{TY}.
The residues of $24k$ modulo $192$ are
\[
 0,24,48,72,96,120,144,168,
\]
and the residues modulo $72$ are $0,24,48$. Every corresponding
table entry is zero.
\end{proof}

\begin{theorem}[Negative even unimodular vanishing]\label{thm:negative-even}
If $b$ is positive definite, even, unimodular, and of positive
rank, then
\begin{equation}\label{eq:negative-even}
 \boxed{\dclass_{-b}=0.}
\end{equation}
\end{theorem}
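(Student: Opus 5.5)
The proof will combine Proposition~\ref{prop:tate-placement} with Lemma~\ref{lem:A24}. The one additional input is the classical fact that an even unimodular lattice has signature divisible by eight. For a positive-definite such lattice this means $8\mid d$, where $d=\operatorname{rk}b$ (see \cite[Chapter~II]{MH}).

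First I will fix $b$ positive definite, even, unimodular, with $d=\operatorname{rk}b>0$. Since $8\mid d$, I write $d=8m$ with $m\ge1$. The class $\dclass_{-b}$ lies in $\pi_{3d}\TMF$, because $(-b)_-=d$ and $(-b)_+=0$, and $3d=24m$. Proposition~\ref{prop:tate-placement} applies because $8\mid d$. It places $\dclass_{-b}$ in $A_{3d}=A_{24m}$, the kernel of the Tate-curve map in that degree. Lemma~\ref{lem:A24} then gives $A_{24m}=0$, and hence $\dclass_{-b}=0$. The sign ambiguity in $\dclass_{-b}$ plays no role, since a class that vanishes vanishes with either sign.

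All the substantive work has been done earlier. The separation of filtrations in Proposition~\ref{prop:ghost} kills the modular-form edge, and the Tachikawa--Yamashita tables kill the Tate kernel. The one point that needs checking is that the hypotheses of Proposition~\ref{prop:tate-placement} are really met. That proposition rests on Theorem~\ref{thm:negative-push} for $b$, which needs only positive definiteness, unimodularity and $d>0$, and on the degree condition $8\mid d$, which evenness provides.

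I would also add a consistency check: the argument must fail for odd $b$, and it does. For $b=\langle1\rangle$ we have $3d=3$, which is not divisible by eight, and indeed $\dclass_{\langle-1\rangle}=\pm\nu\neq0$. So evenness enters exactly through the divisibility $8\mid d$. This confirms that the final step, landing in degree $24m$ where Lemma~\ref{lem:A24} applies, is where the hypotheses are used.
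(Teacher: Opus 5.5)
Your proof is correct and matches the paper's argument: evenness gives $8\mid d$, Proposition~\ref{prop:tate-placement} places $\dclass_{-b}$ in $A_{3d}=A_{24k}$, and Lemma~\ref{lem:A24} shows that group is zero. One small caveat about your closing sanity check: the argument uses only $8\mid d$, not evenness as such, so it does not fail for every odd $b$; for example, for $b=\langle1\rangle^{\oplus8}$ it still applies and correctly yields $\dclass_{-b}=\pm\nu^8=0$.
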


\begin{proof}
The rank of a nonzero definite even unimodular lattice is $d=8k$
for some $k>0$. Proposition~\ref{prop:tate-placement} places
$\dclass_{-b}$ in $A_{3d}=A_{24k}$, which vanishes by
Lemma~\ref{lem:A24}.
\end{proof}

\begin{corollary}\label{cor:minus-E8}
\[
 \dclass_{-E_8}=0\in\pi_{24}\TMF.
\]
\end{corollary}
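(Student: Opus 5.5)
This is a direct specialization of Theorem~\ref{thm:negative-even}. Take $b=E_8$, the positive-definite even unimodular lattice of rank eight. It satisfies every hypothesis of that theorem: it is positive definite, even, unimodular, and of positive rank $d=8$. The theorem therefore gives $\dclass_{-E_8}=0$ immediately.

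The remaining task is to identify the homotopy group containing this class. Negating the form exchanges the two indices, so $(-E_8)_+=0$ and $(-E_8)_-=8$. By \eqref{eq:intro-degree} and Corollary~\ref{cor:unimodular-shift}, the class therefore lies in
\[
 \pi_{3\cdot 8-2\cdot 0}\TMF=\pi_{24}\TMF .
\]
This agrees with $A_{3d}=A_{24}$ in Proposition~\ref{prop:tate-placement}, which is the case $k=1$ of Lemma~\ref{lem:A24}.

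For the reader following the chain of reasoning, the argument through the earlier results runs as follows:
\begin{enumerate}[label=\textup{(\arabic*)},leftmargin=2.5em]
\item Theorem~\ref{thm:negative-push} gives the total pushforward $\mathcal F_{E_8}\simeq\cO_{\cM}^{\mathrm{top}}[24]$.
\item Proposition~\ref{prop:ghost} shows that $\rho_{E_8}$ vanishes on homotopy sheaves, because the source descent spectral sequence is concentrated in filtration $8$ while the target is concentrated in filtration zero.
\item Proposition~\ref{prop:tate-placement} kills the modular-form edge and places the class in $A_{24}$.
\item Lemma~\ref{lem:A24} gives $A_{24}=0$.
\end{enumerate}

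Since this is a specialization, I expect no genuine obstacle. The only point needing care is the degree bookkeeping, in particular that $-E_8$ is negative definite. A wrong sign here would instead suggest a degree of $-16$, where the Tate-kernel argument does not apply.
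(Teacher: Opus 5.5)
Your proposal is correct and matches the paper, which states this corollary as the immediate case $b=E_8$ (rank $d=8$, so $k=1$) of Theorem~\ref{thm:negative-even}. Your degree check $3\cdot 8-2\cdot 0=24$ for the negative-definite form $-E_8$ is right.
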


\section{Four-manifold consequences}\label{sec:four-manifolds}

The lattice calculations give the following values for
four-manifolds with indefinite intersection form.

\begin{corollary}[Indefinite four-manifolds]\label{cor:indefinite-four}
Let $X$ be closed, simply connected, oriented, and suppose
$Q_X$ is indefinite.
\begin{enumerate}[label=\textup{(\roman*)},leftmargin=2.5em]
\item If $Q_X$ is odd, then $Z(X)=0$.
\item If $Q_X$ is even, put
\[
 r=\min\bigl(b_2^+(X),b_2^-(X)\bigr),
 \qquad
 s=\frac{|\sigma(X)|}{8}.
\]
Then
\[
 Z(X)=
 \begin{cases}
  \eta^r\dclass_{-E_8}^{\,s},&\sigma(X)>0,\\
  \eta^r,&\sigma(X)=0,\\
  \eta^r\dclass_{E_8}^{\,s},&\sigma(X)<0.
 \end{cases}
\]
\end{enumerate}
\end{corollary}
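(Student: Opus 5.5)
The plan is to reduce the statement to Theorem~\ref{thm:indefinite} applied to the form $b=-Q_X$, using the definition $Z(X)=\dclass_{-Q_X}$ from~\eqref{eq:intro-Z}. The first step is bookkeeping. Negation preserves unimodularity, parity and indefiniteness, and it swaps the positive and negative indices: $(-Q_X)_+=b_2^-(X)$ and $(-Q_X)_-=b_2^+(X)$. Hence $\min((-Q_X)_+,(-Q_X)_-)=\min(b_2^+(X),b_2^-(X))=r$. Likewise $|(-Q_X)_+-(-Q_X)_-|=|\sigma(X)|$, so the integer $s$ of Theorem~\ref{thm:indefinite}(ii) coincides with $|\sigma(X)|/8$. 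Since $Q_X$ is indefinite, we have $r\ge1$.

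For part~(i), if $Q_X$ is odd then so is $-Q_X$, and $-Q_X$ is also indefinite and unimodular. Theorem~\ref{thm:indefinite}(i) therefore gives $Z(X)=\dclass_{-Q_X}=0$. Concretely, $-Q_X\cong I_{b_-,b_+}$ contains a $\langle1\rangle$ summand because $b_2^-(X)\ge1$, and $\dclass_{\langle1\rangle}=0$.

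For part~(ii), we match the three cases of Theorem~\ref{thm:indefinite}(ii) against the sign of $\sigma(X)$.
\begin{itemize}
\item If $\sigma(X)>0$, then $(-Q_X)_->(-Q_X)_+$. We land in the third case, which gives $\eta^r\dclass_{-E_8}^{\,s}$.
\item If $\sigma(X)<0$, then $(-Q_X)_+>(-Q_X)_-$, and the first case gives $\eta^r\dclass_{E_8}^{\,s}$.
\item If $\sigma(X)=0$, the middle case gives $\eta^r$.
\end{itemize}
To double-check the second case independently, one can use the classification $Q_X\cong rH\oplus s(\mp E_8)$ together with $-H\cong H$, and then apply Theorem~\ref{thm:stabilization}.

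No substantial obstacle is expected; the statement is a translation of Theorem~\ref{thm:indefinite}. The only point requiring care is the sign swap under negation, which determines whether $E_8$ or $-E_8$ appears in each signature case. The other point is that $r\ge1$ is what makes the formulas sign-free, since the residual $\pm$ in~\eqref{eq:multiplicativity-sign} is killed by $2\eta=0$. I would state both points explicitly in the proof.
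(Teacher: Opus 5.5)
Your proposal is correct and takes the same approach as the paper, whose proof is the single line ``apply Theorem~\ref{thm:indefinite} to $-Q_X$.'' Your bookkeeping spells out what that line leaves implicit: the index swap $(-Q_X)_\pm=b_2^\mp(X)$ sends $\sigma(X)>0$ to the $-E_8$ case and $\sigma(X)<0$ to the $E_8$ case, and indefiniteness gives $r\ge1$, which removes the sign ambiguity.
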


\begin{proof}
Apply Theorem~\ref{thm:indefinite} to $-Q_X$.
\end{proof}

\begin{corollary}[Both orientations of K3]\label{cor:both-K3}
\[
 \boxed{Z(\mathrm{K3})=Z(\overline{\mathrm{K3}})=0.}
\]
\end{corollary}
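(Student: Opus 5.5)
The first equality, $Z(\mathrm{K3})=0$, is already Corollary~\ref{cor:K3}, so the work lies in the reversed orientation $\overline{\mathrm{K3}}$. By definition $Z(\overline{\mathrm{K3}})=\dclass_{-Q_{\overline{\mathrm{K3}}}}=\dclass_{Q_{\mathrm{K3}}}$. Since $Q_{\mathrm{K3}}\cong2(-E_8)\oplus3H$, this lattice is even, indefinite and unimodular, with $b_+=3$ and $b_-=19$. I will evaluate its class in two ways.

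The first route is through Corollary~\ref{cor:indefinite-four}(ii) applied to $X=\overline{\mathrm{K3}}$. Here $\sigma(\overline{\mathrm{K3}})=16>0$, $r=3$ and $s=2$, so
\[
 Z(\overline{\mathrm{K3}})=\eta^3\dclass_{-E_8}^{\,2}.
\]
Corollary~\ref{cor:minus-E8} gives $\dclass_{-E_8}=0$, so the product vanishes. An equivalent way to say this is to use direct-sum compatibility (L3) in the presentation $2(-E_8)\oplus3H$, which gives $\pm\dclass_{-E_8}^2\dclass_H^3=0$.

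As a check, and as an alternative, I can apply Theorem~\ref{thm:negative-even} directly to the positive-definite even lattice $b=2E_8$, which has rank $16$. This gives $\dclass_{-2E_8}=0$, and Theorem~\ref{thm:stabilization} then yields $\dclass_{-2E_8\oplus3H}=\eta^3\cdot0=0$. This avoids taking products of $\dclass_{-E_8}$.

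The main obstacle is bookkeeping rather than a new idea. I must get the orientation conventions right, namely that $Z(X)=\dclass_{-Q_X}$ and that reversing orientation negates $Q_X$. With those in place, the reversed K3 lands on the negative-definite summand $-2E_8$, where the Tate-kernel argument of Section~\ref{sec:negative} applies, and not on the $\nu$-annihilation case. I should also note that, unlike Remark~\ref{rem:K3-target}, degree reasons alone do not suffice here. The target group is $\pi_{3\cdot19-2\cdot3}\TMF=\pi_{51}\TMF$, and I will not rely on its vanishing; the argument goes through Theorem~\ref{thm:negative-even} instead.
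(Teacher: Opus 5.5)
Your proposal is correct and matches the paper's proof: Corollary~\ref{cor:K3} handles the complex orientation, and for $\overline{\mathrm{K3}}$ you rewrite $Z(\overline{\mathrm{K3}})=\dclass_{2(-E_8)\oplus3H}$ and show it vanishes using direct-sum compatibility and $\dclass_{-E_8}=0$. Going through Corollary~\ref{cor:indefinite-four} is the same argument in different form, and your alternative of applying Theorem~\ref{thm:negative-even} to $2E_8$ and then Theorem~\ref{thm:stabilization} is also valid, since rank $16$ places the class in $A_{48}=0$.
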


\begin{proof}
Corollary~\ref{cor:K3} treats the complex orientation.
Reversing the orientation gives
\[
 -Q_{\overline{\mathrm{K3}}}=Q_{\mathrm{K3}}
 \cong2(-E_8)\oplus3H.
\]
The resulting class vanishes by direct-sum compatibility
and Corollary~\ref{cor:minus-E8}.
\end{proof}

\begin{remark}[The reversed-K3 entry in the lattice model]\label{rem:chae-reversed-K3}
The nonzero reversed-K3 value reported in
\cite[Section~2, Table~1]{Chae} is incompatible with the rules
of the topological lattice model that the table extends.
Defined on topological four-manifolds without requiring a
smooth structure, this model is multiplicative under connected
sum and assigns $\eta$ to $S^2\times S^2$
\cite[Section~2.7, equations (2.36)--(2.41)]{GPPV}.
Let $P$ denote the oriented topological $E_8$-manifold.
The intersection form
$Q_{\overline{\mathrm{K3}}}\cong2E_8\oplus3H$ allows us
to apply Freedman's topological decomposition theorem,
which supplies an oriented homeomorphism
\[
 \overline{\mathrm{K3}}\cong
 P\#P\#\bigl(\#^3(S^2\times S^2)\bigr)
\]
\cite[Theorems~1.5, 1.7 and 1.9]{Freedman}.
Write $e=\mathcal T(P)\in\pi_{24}\TMF$, with degree
specified by the model's convention $3b_2^+-2b_2^-$.
The periodic homotopy-group calculation
$\pi_{27}\TMF\cong\Z/4\oplus\Z/3$
\cite[Appendix~C.1, Tables~3 and~4]{TY} gives
$12(\nu e)=0$. Substituting the relation $\eta^3=12\nu$,
we obtain
\[
 \mathcal T(\overline{\mathrm{K3}})
 =e^2\eta^3=e\bigl(12\nu e\bigr)=0.
\]
The argument uses an actual integral class $e$ without
requiring its value to be known; in particular, it makes no
identification of $e$ with the modular form $\Delta$.
The connected-sum decomposition is topological, with no
smooth decomposition asserted. Neither
Hypotheses~\ref{contract:foundational} nor an identification
of $\mathcal T$ with the zero-section invariant $Z$ is
needed for this calculation.
\end{remark}

Combining the lattice formulas for the zero-section invariant
with the standard restrictions on smooth closed simply connected
spin four-manifolds now determines all its values in this class.

\begin{theorem}[Values on smooth simply connected spin four-manifolds]\label{thm:spin-classification}
Let $X$ be a smooth closed simply connected spin four-manifold. Then
\begin{equation}\label{eq:spin-classification}
 Z(X)=
 \begin{cases}
  1,&Q_X=0,\\
  \eta^q,&Q_X\cong qH\quad(q\ge1),\\
  0,&\sigma(X)\ne0.
 \end{cases}
\end{equation}
Consequently the only nonzero possibilities are
\[
\begin{array}{c|cccc}
 Q_X&0&H&2H&3H\\ \hline
 Z(X)&1&\eta&\eta^2&\eta^3.
\end{array}
\]
The equality $Z(\overline X)=Z(X)$ within this class
follows from the value formula; no orientation-duality axiom
is assumed.
\end{theorem}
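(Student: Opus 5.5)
Since $X$ is spin and simply connected, $Q_X$ is even and unimodular, and $Z(X)=\dclass_{-Q_X}$. I will split into three cases: $\sigma(X)=0$, $\sigma(X)>0$, and $\sigma(X)<0$.

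\emph{Zero signature.} If $Q_X=0$, the class of the rank-zero form is $\dclass_0=1$. Otherwise $Q_X$ is nonzero with signature zero, so it is indefinite. The even indefinite classification \cite[Chapter~II]{MH} then gives $Q_X\cong qH$ with $q\ge1$. Since $-H\cong H$, Corollary~\ref{cor:pure-hyperbolic} gives $Z(X)=\eta^q$, which is zero for $q\ge4$ by~\eqref{eq:eta-relations}. This yields the table entries $0,H,2H,3H\mapsto1,\eta,\eta^2,\eta^3$.

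\emph{Nonzero signature.} Here Donaldson's theorem \cite{Donaldson} forbids a nonzero definite even form, so $Q_X$ is indefinite. The classification gives
\[
 Q_X\cong rH\oplus s(\pm E_8),\qquad r\ge1,\ s\ge1.
\]
If $\sigma(X)>0$, then $-Q_X\cong rH\oplus s(-E_8)$. Direct-sum compatibility gives $\dclass_{-Q_X}=\pm\eta^r\dclass_{-E_8}^{\,s}$, and this vanishes by Corollary~\ref{cor:minus-E8}. No smooth input beyond Donaldson is needed in this case.

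If $\sigma(X)<0$, then $-Q_X\cong sE_8\oplus rH$. By Corollary~\ref{cor:three-H} applied to the positive-definite lattice $sE_8$, it suffices to prove $r\ge3$. Rokhlin's theorem \cite{Rokhlin} gives $16\mid\sigma$, so $s=|\sigma|/8$ is even and $s\ge2$. Furuta's inequality \cite{Furuta} gives $b_2\ge\tfrac{10}{8}|\sigma|+2$. Since $b_2=2r+8s$ and $|\sigma|=8s$, this becomes
\[
 2r+8s\ge10s+2,\qquad\text{that is,}\qquad r\ge s+1\ge3.
\]
Hence $Z(X)=\eta^r\dclass_{sE_8}=0$.

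The main point to check is exactly this step: that Furuta's bound supplies three hyperbolic summands, which requires $s\ge2$ from Rokhlin. For $s=2$ the bound gives exactly $r\ge3$, which is just enough for Corollary~\ref{cor:three-H}. The other point to verify is the orientation bookkeeping, namely that $Z$ is evaluated on $-Q_X$, so that the negative-even vanishing handles $\sigma>0$ and $\nu$-annihilation handles $\sigma<0$.

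Finally, $Z(\overline X)=Z(X)$ follows from the value formula. Orientation reversal preserves the property $Q=0$, sends $qH$ to $-qH\cong qH$, and preserves $\sigma\ne0$. Thus both $X$ and $\overline X$ fall in the same case of~\eqref{eq:spin-classification} with the same value.
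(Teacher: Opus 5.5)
Your proposal is correct and follows the paper's proof essentially step by step. It uses Donaldson to force indefiniteness, the even indefinite classification, $\dclass_{-E_8}=0$ with direct-sum compatibility when $\sigma(X)>0$, and Rokhlin plus Furuta giving $r\ge s+1\ge3$ so that Corollary~\ref{cor:three-H} applies when $\sigma(X)<0$. The orientation bookkeeping, with $Z(X)=\dclass_{-Q_X}$, matches the paper exactly.
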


\begin{proof}
Donaldson's theorem excludes a nonzero definite even
intersection form for a smooth simply connected four-manifold,
since such a form would have to be diagonal and hence odd
\cite{Donaldson}. Every nonzero spin intersection form
under consideration is therefore indefinite.

In the case $\sigma(X)=0$, the classification of even
indefinite forms identifies $Q_X\cong qH$, for which
Corollary~\ref{cor:pure-hyperbolic} gives $Z(X)=\eta^q$.

Suppose next that $\sigma(X)<0$. In this orientation,
the intersection form has the presentation
\[
 Q_X\cong p(-E_8)\oplus qH.
\]
Rokhlin's theorem requires $p$ to be even
\cite{Rokhlin}, and Furuta's $10/8$ theorem \cite{Furuta}
yields the inequalities
\[
 8p+2q\ge10p+2,
 \qquad
 q\ge p+1.
\]
For $p>0$, it follows that $p\ge2$ and $q\ge3$.
The positive-definite lattice $pE_8$ thus satisfies the
conditions of Corollary~\ref{cor:three-H}, which gives
\[
 Z(X)=\dclass_{pE_8\oplus qH}=0.
\]

If $\sigma(X)>0$, then
\[
 Q_X\cong pE_8\oplus qH
 \qquad(p>0),
\]
and
\[
 Z(X)=\dclass_{p(-E_8)\oplus qH}=0
\]
by Corollary~\ref{cor:minus-E8} and direct-sum compatibility.

The zero form $Q_X=0$ gives the unit class. For the pure
hyperbolic forms, the relation $\eta^4=0$ leaves precisely
the nonzero values listed above.
\end{proof}

\section{Interpretation and further questions}\label{sec:scope}

Theorem~\ref{thm:spin-classification} classifies the values
of the invariant on the stated spin family, rather than the
smooth four-manifolds themselves. Invariance under orientation
reversal follows from this value formula, so the calculation
of the two K3 values requires no orientation-duality axiom.

The definite-lattice results differ in what they determine:
Theorem~\ref{thm:nu-annihilation} annihilates a
positive-definite class after multiplication by $\nu$,
whereas Theorem~\ref{thm:negative-even} proves that a negative
even unimodular class itself vanishes. Neither computes
$\dclass_{E_8}$. The conjectural identification of its
modular-form edge with the $E_8$ theta class in \cite{GKMP}
therefore remains independent of the value theorem proved here.

The value formula is relative to
Hypotheses~\ref{contract:foundational}; the arbitrary-lattice
construction and finite compatibilities in \textup{(L1)--(L4)}
remain assumptions. The stronger Picard-groupoid refinement in
\cite[Conjecture~4.6]{GKMP} and an extended TQFT would require
further structure.

It remains to determine what secondary information
$\rho_b$ retains when its induced maps on homotopy sheaves
vanish. The rank-one example
$\dclass_{\langle-1\rangle}=\pm\nu$ makes clear that
this question extends beyond the modular-form-edge calculation.
The extent to which the value theorem holds beyond simply
connected spin manifolds is also unresolved. Both questions
concern information left undetermined by the lattice vanishing
arguments.

\section*{Acknowledgments}

We thank Yuji Tachikawa for helpful correspondence on the surrounding $\TMF$ program and generously providing his lecture notes. The initial exploration, source synthesis, and drafting of this project were substantially assisted by OpenAI's GPT-5.6 Sol and GPT-6 Astra under the authors' direction.\footnote{The authors initially used Anthropic's Claude Fable 5 to generate an 88-page notes for this multi-paper project, but were later corrected and revised by ChatGPT.} The authors have checked the arguments and take responsibility for the mathematical claims.

\appendix
\section{Integral changes of basis}\label{app:certificates}

For reference, the integral changes of basis used in
Sections~\ref{sec:H-shift} and~\ref{sec:H-class} are recorded
below:
\[
 P(H\oplus\langle1\rangle)P^{\mathsf T}
 =\operatorname{diag}(1,1,-1),
 \qquad
 P=\begin{pmatrix}1&0&1\\0&1&-1\\1&-1&1\end{pmatrix},
\]
with $\det P=-1$,
\[
 B_K=\begin{pmatrix}1&0\\1&1\end{pmatrix},
 \qquad
 B_K^{\mathsf T}
 \begin{pmatrix}0&1\\1&-1\end{pmatrix}
 B_K
 =\operatorname{diag}(1,-1),
\]
and
\[
 B_J=\begin{pmatrix}1&1\\0&-1\end{pmatrix},
 \qquad
 B_J^{\mathsf T}
 \begin{pmatrix}1&1\\1&0\end{pmatrix}
 B_J
 =\operatorname{diag}(1,-1).
\]
These matrix identities provide explicit finite
verifications of the isometries used in the proofs.

\section{Tate-kernel residues}\label{app:tate-residues}

The periodic tables reduce Lemma~\ref{lem:A24} to the following
finite sets of residues:
\[
 24k\bmod192\in\{0,24,48,72,96,120,144,168\},
\]
\[
 24k\bmod72\in\{0,24,48\}.
\]
Every corresponding entry in the $2$- and $3$-primary
Tate-kernel tables of \cite[Appendix~C]{TY} is zero.
The same tables also give $\pi_{-29}\TMF=0$, the
target-group vanishing used in Remark~\ref{rem:K3-target}.

\begingroup
\raggedright
\bibliographystyle{amsalpha}
\bibliography{references}
\endgroup

\end{document}